\newcommand{\brk}[1]{{\left\langle{#1}\right\rangle}}
\newcommand{\ve}{\varepsilon}
\newcommand{\ro}{r}
\newcommand{\Xr}{{X_r}}
\newcommand{\srcol}{{\C\setminus \Xr}}
\newcommand{\A}{{\mathsf A}}
\newcommand{\Nr}{{\mathsf N}_\ro}
\newcommand{\PP}{{\mathsf P}}
\newcommand{\sot}{{\footnotesize \mathsf{SO(3)}}}
\newcommand{\sud}{{\footnotesize \mathsf{SU(2)}}}
\newcommand{\Hr}{H_r}
\newcommand{\qr}{{q}}
\newcommand{\coh}{\omega}
\newcommand{\vp}{\varphi}
\newcommand{\ord}{\operatorname{ord}}
\newcommand{\WRT}{\operatorname{WRT}}
\newcommand{\e}{{\operatorname{e}}}
\newcommand{\slt}{{\mathfrak{sl}(2)}}
\newcommand{\UsltH}{{U_q^{H}\slt}}
\newcommand{\Ubar}{{\wb U_q^{H}\slt}}
\newcommand{\cat}{\mathscr{C}}
\newcommand{\catd}{\mathcal{D}}
\newcommand{\Id}{\operatorname{Id}}
\newcommand{\bp}[1]{{\left(#1\right)}}
\newcommand{\qn}[1]{{\left\{#1\right\}}}
\newcommand{\qN}[1]{{\frac{\qn{#1}}{\qn1}}}
\newcommand{\qd}{{\mathsf d}}
\newcommand{\qdim}{\operatorname{qdim}}
\newcommand{\End}{\operatorname{End}}
\newcommand{\Hom}{\operatorname{Hom}}
\newcommand{\sign}{\operatorname{sign}}
\newcommand{\C}{\ensuremath{\mathbb{C}} }
\newcommand{\Z}{\ensuremath{\mathbb{Z}} }
\newcommand{\Zd}{\Z/2\Z }
\newcommand{\wt}{\widetilde}
\newcommand{\wb}{\overline}
\newcommand{\ds}{\displaystyle}
\newcommand{\sig}{{\tau}}
\newcommand{\ms}[1]{\mbox{\tiny$#1$}}
\newcommand{\epsh}[2]
         {\begin{array}{c} \hspace{-1.3mm}
        \raisebox{-4pt}{\epsfig{figure=#1,height=#2}}
        \hspace{-1.9mm}\end{array}}
\newcommand{\nota}[1]{\marginpar{\scriptsize #1}}
\newtheorem{Df}{Definition}
\newtheorem{definition}[Df]{Definition}
\newtheorem{theorem}[Df]{Theorem}
\newtheorem{prop}[Df]{Proposition}
\newtheorem{proposition}[Df]{Proposition}
\newtheorem{lemma}[Df]{Lemma}
\newtheorem{remark}[Df]{Remark}
\newtheorem{corollary}[Df]{Corollary}
\newtheorem{conjecture}[Df]{Conjecture}
\newcounter{exo} \newcounter{numexercice}
\renewcommand{\theexo}{\arabic{exo}} 
\begin{document}
\title[WRT and non semi-simple $\slt$ 3-manifold invariants]
{Relations between Witten-Reshetikhin-Turaev and non semi-simple
  $\slt$ 3-manifold invariants} 

\author[Costantino]{Francesco Costantino}
\address{Institut de Recherche Math\'ematique Avanc\'ee\\
  Rue Ren\'e Descartes 7\\
  67084 Strasbourg, France} \email{costanti@math.unistra.fr}

\author{Nathan Geer}
\address{Mathematics \& Statistics\\
  Utah State University \\
  Logan, Utah 84322, USA}
\thanks{The first author's research was supported by French ANR
  project ANR-08-JCJC-0114-01. Research of the second author was
  partially supported by NSF grants  DMS-1007197 and DMS-1308196.  }\
\email{nathan.geer@usu.edu}

\author{Bertrand Patureau-Mirand}
\address{Univ. Bretagne-Sud,  UMR 6205, LMBA, F-56000 Vannes, France}
\email{bertrand.patureau@univ-ubs.fr}

\begin{abstract}
  The Witten-Reshetikhin-Turaev invariants extend the Jones
  polynomials of links in $S^3$ to invariants of links in 3-manifolds.
  Similarly, in \cite{CGP}, the authors constructed two 3-manifolds
  invariant $\Nr$ and $\Nr^0$ which extend the Akutsu-Deguchi-Ohtsuki
  invariant of links in $S^3$ colored by complex numbers to links in
  arbitrary manifolds.  All these invariants are based on
  representation theory of
  the quantum group $U_qsl_2$, where the definition of the invariants
  $\Nr$ and $\Nr^0$ uses a non-standard category of $U_qsl_2$-modules
  which is not semi-simple.
 In this paper we
  study the second invariant $\Nr^0$ and consider its relationship
  with the WRT invariants.  In particular,
  we show that the ADO invariant of a knot in $S^3$ is a meromorphic
  function of its color and we provide a strong relation between its
  residues and the colored Jones polynomials of the knot. Then we
  conjecture a similar relation between $\Nr^0$ and a WRT invariant.
  We prove this conjecture when the 3-manifold $M$ is not a rational
  homology sphere and when $M$ is a rational homology sphere obtained
  by surgery on a knot in $S^3$ or when $M$ is a connected sum of such
  manifolds.
\end{abstract}

\maketitle

\section*{Introduction}

In \cite{Wi}, Witten proposes a program to construct a topological
invariant of $3$-manifolds from the viewpoint of quantum mathematical
physics.  Reshetikhin and Turaev \cite{RT} give rigorous construction
of these invariants which have become known as quantum invariants of
$3$-manifolds.  These invariants are defined via surgery presentations
of a $3$-manifold.  The best known example of these invariants is a
weighted sum of colored Jones polynomials.  The invariants of
Reshetikhin and Turaev generalize to the setting of modular
categories.  Some of the common obstructions to applying this
construction to any ribbon tensor category $\catd$ include the
following facts: (1) the simple objects may have zero ``quantum
dimension", (2) there might be infinitely many isomorphism classes of
simple objects in $\catd$ and (3) $\catd$ might be non-semi-simple.
In \cite{CGP} the authors derive a general categorical setting where
these obstructions can be overcome.  In particular, they show that the
category $\cat$ of nilpotent representations of a generalized version
of quantized $\mathfrak{sl}(2)$ at a primitive $r$\textsuperscript{th}
ordered root of unity gives rise to two invariants: $\Nr$ and $\Nr^0$.
In this paper we investigate the invariant $\Nr^0$.

Let $\cat$ be the category mentioned above and defined in Subsection
\ref{SS:QuantMod}.  This category has complex family of weight modules
divided into typical and atypical modules.  Here all the atypical
modules have integral weights.

Let $F$ be the usual Reshetikhin-Turaev invariant of links in $S^3$
arising from $\cat$.  The invariant $F$ has the following properties:
\begin{itemize}
\item If $L$ is a link whose components are all colored by simple modules of
  $\cat$ with integral weights then $F$ is determined by the Kauffman bracket
  and so is a version of the colored Jones polynomial.
\item If $L$ is a link with a component colored by a typical module
  then $F(L)=0$.
\end{itemize}
In \cite{GPT}, the second two authors and Turaev give an extension of
$F$ to links colored with modules in $\cat$ with non-integral weights
(see also \cite{CM}).  In particular, we construct an invariant $F'$
defined on links with at least one component colored by a typical
module.
$F'$ is a generalization of the links invariants defined by Akutsu, Deguchi
and Ohtsuki in \cite{ADO}.
We have the following relation
$$F'(L_1\sqcup L_2)=F'(L_1)F(L_2)$$
where $L_1$ is in the domain of $F'$ and $L_2$ is any $\cat$-colored link.  From this relation it follows that $F'$ recovers $F$:  if $L$ is any $\cat$-colored link then 
\begin{equation}\label{E:FF'}
F(L)= \frac{F'(L\sqcup o)}{F'(o)}
\end{equation} 
where $o$ is an unknot colored by any typical module.  Thus, $F'$ is a
kind of extension of the colored Jones polynomial to complex colors.
Furthermore, as we will show in Corollary \ref{cor:residuejones}, the
invariant $F'(K_{\alpha})$ of a knot $K\subset S^3$ colored by a
typical module of weight $\alpha\in \C$ is a meromorphic function of
${\alpha}$ whose residues at the integers
are proportional to the colored Jones polynomials of $K$ evaluated at
$q=\exp(\frac{i\pi}{r})$.  This relation allows us to re-prove the well
known Symmetry Principle (see \cite{KM}) for the colored Jones
polynomials of $K$ using a mainly graphical argument detailed in
Corollary \ref{C:LaurentPoly} (see Remark \ref{rem:symmetry}).

In \cite{CGP}, the authors layout a relationship between $\Nr$ and
$\Nr^0$ analogous to that outlined above between $F'$ and $F$; we will
now briefly recall this relation.  The invariants $\Nr$ and $\Nr^0$
are WRT-type 3-manifold invariant which are certain weighted sums of
$F'(L)$ where $L$ is a surgery presentation of $M$.  These invariants
are topological invariants of triples (a closed oriented 3-manifold
$M$, a link $T$ in $M$, an element $\coh$ in $H^1(M\setminus T;
\C/2\Z)$).  Here for $\Nr$ the triples must satisfy some requirements
of ``typicality'' as in the case of $F'$.  The invariant $\Nr^0$ is
zero unless $\coh$ is in the image of the natural map
$H^1(M;\Z/2\Z)\to H^1(M;\C/2\Z)$ induced by the universal coefficient
theorem. (Compare this with the above statement that $F(L)$ is zero if
at least one component of $L$ is colored by an atypical module.)
Finally, $\Nr$ recovers $\Nr^0$ (compare

 with Equation \eqref{E:FF'}): 
$$\Nr^0(M,T,\coh)=
\frac{\Nr \big((M,T,\coh)\#(M',T',\coh')\big)}{\Nr(M',T',\coh')}$$ 
where $(M',T',\coh')$ is a triple where $\Nr$ does not vanish (for
further details on the notion of connected sum see \cite{CGP}).

Since $F$ is essentially the colored Jones polynomial, the above
analogy leads us to the question: Is $\Nr^0$ related to the
WRT-invariant?  The purpose of this paper is to answer this question
positively for certain types of triples $(M,T,\coh)$.  To formulate
this properly we must define the WRT-invariant of a triple
$(M,T,\coh)$.  Kirby and Melvin \cite{KM} and Blanchet \cite{Bl}
consider a WRT-type invariants of $(M,\coh)$ where $\coh \in
H^1(M,\Zd)$.  In Theorem \ref{T:RefinedWRT} we give a slight
generalization of their invariants to triples of the form $(M,T,\coh)$
where $T$ is a link in $M$ and $\coh \in H^1(M\setminus T,\Zd)$.  We
denote this invariant by $\WRT_r(M,T,\coh)$.  The question above can
be formulated in the following conjecture.

If $G$ is a finite abelian group let $\ord(G)$ be the order of $G$,
i.e. the number of elements in the set underlying $G$.  If $G$ is an
infinite abelian group set $\ord(G)=0$.
\begin{conjecture}\label{C:MainConj}
Let $(M,T,\coh)$ be a compatible triple where $\coh$ take values in $\Z/2\Z
\subset \C/2\Z$.  Then  $$\Nr^0(M,T,\coh)=\ord(H_1(M; \Z)) \WRT_r(M,T,\coh).$$  
\end{conjecture}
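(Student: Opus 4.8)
The plan is to compare the two invariants through their surgery formulas and to use the meromorphic/residue structure of $F'$ established in Corollary~\ref{cor:residuejones} as the bridge. Writing $M$ as surgery on a framed link $L=L_1\cup\cdots\cup L_n\subset S^3$ (with $T$ and the class $\coh$ carried in the complement), the invariant $\WRT_r(M,T,\coh)$ of Theorem~\ref{T:RefinedWRT} is a weighted sum of colored Jones polynomials $F$ of $L\cup T$, the colors running over the atypical simple objects compatible with $\coh$ and weighted by the twist coefficients determined by the linking matrix $B$. On the other side, $\Nr^0(M,T,\coh)$ is given by the corresponding color integral of $F'(L\cup T)$ inherited from the surgery formula for $\Nr$, in which the meridians of $L$ are decorated by the continuous family of typical colors. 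Since $F'(K_\alpha)$ is meromorphic in $\alpha$ with residues at the integers proportional to the colored Jones polynomials of $K$ (Corollary~\ref{cor:residuejones}), the idea is to collapse the color integral defining $\Nr^0$ onto a sum of residues, each of which reproduces a term of the $\WRT_r$ sum.

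First I would treat the case of surgery on a knot $K$, where $H_1(M;\Z)$ is the cyclic group $\Z/p\Z$ determined by the framing. Here $\Nr^0(M,T,\coh)$ reduces to a single color integral of $F'((K\cup T)_\alpha)$ against the twist weight coming from the framing $p$; closing the contour and summing the residues at the integer colors, and invoking Corollary~\ref{cor:residuejones}, turns this into exactly the finite colored--Jones sum defining $\WRT_r(M,T,\coh)$. The proportionality constant $\ord(H_1(M;\Z))=|p|$ should emerge as a Gauss-sum/lattice factor: the period of the twisted integrand is governed by $p$, so the integral over one period of the continuous parameter produces $|p|$ copies of the discrete WRT sum (equivalently, the index of $p\Z\subset\Z$). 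The symmetry established in Corollary~\ref{C:LaurentPoly} is what guarantees that the residues are paired with the correct WRT colors after the refinement by $\coh\in\Z/2\Z$.

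The connected-sum case then follows formally once the prime pieces are settled: both $\Nr^0$ and $\WRT_r$ are multiplicative under connected sum up to the fixed $S^3$ normalization, and $\ord$ is multiplicative since $H_1(M_1\#M_2;\Z)\cong H_1(M_1;\Z)\oplus H_1(M_2;\Z)$, so the identity is preserved. For the case in which $M$ is \emph{not} a rational homology sphere the right-hand side is $0$ because $H_1(M;\Z)$ is infinite, and one must instead show $\Nr^0(M,T,\coh)=0$ directly. This is where the degeneracy of the linking matrix enters: a null-homologous surgery component contributes a factor given by a finite sum of (modified) dimensions over the atypical colors compatible with $\coh$, and this sum vanishes. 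This is the $3$-manifold analogue of the vanishing of $F$ on a component colored by an atypical meridian.

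The main obstacle is the general nondegenerate linking matrix. For a knot the homology is cyclic and a one-variable residue computation suffices, but for an arbitrary link the color integral defining $\Nr^0$ is $n$-dimensional, and collapsing it requires a multivariable residue calculation governed by the quadratic form $B$; the weight $\ord(H_1(M;\Z))=|\det B|$ then appears as the order of the finite group $\Z^n/B\Z^n$, i.e.\ as the number of lattice points of $B^{-1}\Z^n/\Z^n$ contributing residues. Controlling these multivariable residues and the attendant Gauss sums uniformly---rather than after reducing to a single knot surgery or a connected sum of such pieces---is precisely the difficulty that keeps the general statement of Conjecture~\ref{C:MainConj} open.
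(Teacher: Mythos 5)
Your overall architecture (knot surgery by a residue-type computation at integer colors, connected sums by multiplicativity, $b_1>0$ by showing $\Nr^0=0$ against $\ord=0$) matches the scope of what the paper actually establishes --- and your closing concession is accurate: the statement is a conjecture, proved in the paper only for these cases. But the central technical device you propose rests on a false premise. $\Nr^0$ is not defined by a ``color integral'' over a continuous family of typical meridian colors: Kirby colors are \emph{finite} sums $\Omega_\alpha=\sum_{k\in\Hr}\qd(\alpha+k)V_{\alpha+k}$, and, worse, when $\coh$ takes values in $\Zd$ no surgery presentation is computable at all (Definition \ref{def:adm} requires $g_\coh(L_i)\notin\Zd$; indeed Theorem \ref{T:N0sl2} says $\Nr^0$ \emph{vanishes} whenever a computable presentation exists). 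So there is no integral to close a contour on, and no direct surgery formula for the left-hand side. The paper's actual route in the knot case (Theorem \ref{ThMknot}, and only for $T=\emptyset$) is: use the defining connected sum with $(S^3,u_\alpha,\coh_\alpha)$, slide $u_\alpha$ over $K$ to get the computable $2$-cable $DK_{(\Omega_{e-\alpha},V_\alpha)}$, exploit independence of $\alpha$ to extract $\sum_{k\in\Hr}q^{k+e}\PP(k+e)$ where $\PP(\alpha)=\sum_{k\in\Hr}F'(K_{V_{\alpha+k}})$, prove $\PP$ extends continuously across the integers via the $2$-cable and Lemma \ref{L:LaurPoly}, and then take a limit $\ve\to0$. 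Your residue intuition is right in spirit --- the limit computes $\lim_{\ve\to0}\qd(\ve+2n+e)\bigl(1-q^{-2rf\ve}\bigr)$, a pole--zero cancellation --- but the factor $|f|=\ord(H_1(M;\Z))$ does \emph{not} arise as ``$|p|$ copies over one period of an integral''; it comes from the quasi-periodicity $F'(K^f_{V_{\alpha+2r}})=q^{2r\alpha f}F'(K^f_{V_\alpha})$ of Corollary \ref{C:LaurentPoly}, which pairs the colors $2n+e$ and $2n+e-2r$ so that the first-order zero of $1-q^{-2rf\ve}$ cancels the simple pole of $\qd$ and leaves $rf$ by l'H\^opital. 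Converting the resulting typical-color sum into the WRT sum then requires Proposition \ref{P:SandV} ($\brk{T_{V_k}}=\brk{T_{S_{r-1-k}}}$), the folding $\vp_e(k)+\vp_e(-k)=\qn{k}^2\brk{T_{S_{r-1-k}}}$, and the normalization $\Delta^{\sot}_\pm=\pm\Delta_\pm/(r\qn1)$ of Lemma \ref{L:Dsot} --- none of which your sketch supplies.

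The $b_1(M)>0$ case has a second genuine gap. You claim a null-homologous surgery component contributes ``a finite sum of modified dimensions over the atypical colors compatible with $\coh$'' which vanishes; but such a component is not split from the rest of the presentation, so no dimension factor splits off, and again the presentation is not computable for integral $\coh$, so the formula you would apply this to is unavailable. The paper's argument is entirely different and does work: deform $\coh$ to $\coh+\bar\alpha\delta$ with $\delta\in H^1(M;\Z)$ nontrivial; for small $\alpha\neq0$ the class is non-integral, the presentation becomes computable, and $\Nr^0=0$ by Theorem \ref{T:N0sl2} (via Propositions 1.5 and 3.14 of \cite{CGP}); then continuity in $\alpha$ at $0$ (from Corollary \ref{C:TangleHolom}, with $\qd$ evaluated away from its poles) gives $\Nr^0(M,T,\coh)=0$, matching $\ord(H_1(M;\Z))=0$. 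Your multiplicativity argument for connected sums is fine and is exactly the paper's remark, and your diagnosis that the multivariable (general linking matrix) case is the open obstruction is correct --- but as written, both nontrivial cases of your proposal lean on mechanisms that either do not exist ($n$-dimensional color integrals) or are unproved (the vanishing dimension sum).
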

Remark that if an abelian group $G$ has a square presentation matrix
$A\in{\mathcal M}_n(\Z)$ then $\ord(G)=|\det(A)|$.  In particular, if
a $3$-manifold is obtained by surgery on a link in $S^3$ whose linking
matrix is $A$ then $A$ is a presentation matrix for $H_1(M; \Z)$ thus
$\ord(H_1(M; \Z))=|\det(A)|$.

It should be noticed that the invariant $\Nr$ does not reduce to
$\Nr^0$.  For example, the invariant $WRT_r$ is trivial for $r=2$ and
$\Nr^0$ should only depends of $H_1(M,\Z)$.  But for $r=2$, the
invariant $\Nr$ is related to the Reidemeister torsion.  This is shown in
\cite{BCGP} where the two invariants $\Nr$ and $\Nr^0$ are extended to
manifolds with boundary using the setting of topological quantum field
theory.

In Sections \ref{S:ConjKnot} and \ref{S:ConjNonHomol} we prove this
conjecture in the following two cases: (1) when $M$ is an empty
rational homology sphere obtained by surgery on a knot in $S^3$ (or
more in general a connected sum of manifolds of this type) and (2)
when the first Betti number of $M$ is greater than zero.

\section{Preliminaries}
\subsection{Notation} \label{SS:NotationNr} All manifolds in the
present paper are oriented, connected and compact unless explicitly
stated.  All tangles in this paper will be framed and oriented.  Given
a set $Y$, a graph is said to be \emph{$Y$-colored} if it is equipped
with a map from the set of its edges to $Y$.

Let $r$ be an integer greater or equal to $2$ and let
$\qr=\e^{i\pi/r}$.  For $x\in \C$, we use the notation $q^x$ for
$\e^{x i\pi/r}$ and set $\qn x=q^{x}-q^{-x}$.  Let $\Xr=\Z\setminus
r\Z \subset \C$ and define the \emph{modified dimension}
$\qd:\srcol\to\C$ by
\begin{equation}\label{E:Def_qd}
  \qd(\alpha)=(-1)^{r-1}\prod_{j=1}^{r-1} \frac{\{j\}}{\{\alpha+r-j\}}
  =(-1)^{r-1}\frac{r\,\qn{\alpha}}{\qn{r\alpha}}.
\end{equation}
Finally, let
\begin{equation}
  \label{eq:Hr}
  \Hr=\{1-\ro,3-\ro,\ldots,\ro-3,\ro-1\}.
\end{equation}

\subsection{A quantization of $\slt$ and some of its modules}\label{SS:QuantMod}
Here we give a slightly generalized version of quantum $\slt$.  Let
$\UsltH$ be the $\C$-algebra given by generators $E, F, K, K^{-1}, H$
and relations:
\begin{align*}
  HK&=KH, & 
  [H,E]&=2E, & [H,F]&=-2F,\\
  KEK^{-1}&=q^2E, & KFK^{-1}&=q^{-2}F, &
  [E,F]&=\frac{K-K^{-1}}{q-q^{-1}}.
\end{align*}
The algebra $\UsltH$ is a Hopf algebra where the coproduct and counit
are defined by
\begin{align*}
  \Delta(E)&= 1\otimes E + E\otimes K, 
  &\varepsilon(E)&= 0, 
  \\
  \Delta(F)&=K^{-1} \otimes F + F\otimes 1,  
  &\varepsilon(F)&=0,
    \\
  \Delta(H)&=H\otimes 1 + 1 \otimes H, 
  & \varepsilon(H)&=0, 
  \\
  \Delta(K)&=K\otimes K,
  &\varepsilon(K)&=1.
\end{align*}
Define $\Ubar$ to be the Hopf algebra $\UsltH$ modulo the relations
$E^\ro=F^\ro=0$.

Let $V$ be a finite dimensional $\Ubar$-module.  An eigenvalue
$\lambda\in \C$ of the operator $H:V\to V$ is called a \emph{weight}
of $V$ and the associated eigenspace is called a \emph{weight space}.
We call $V$ a \emph{weight module} if $V$ splits as a direct sum of
weight spaces and $\qr^H=K$ as operators on $V$.  Let $\cat$ be the
category of finite dimensional weight $\Ubar$-modules.  The category
$\cat$ is a ribbon Ab-category, see \cite{GPT,Mu,Oh}.

We will now recall the definition of the duality morphisms and the
braiding of the category $\cat$.  Let $V$ and $W$ be objects of
$\cat$.  Let $\{v_i\}$ be a basis of $V$ and $\{v_i^*\}$ be a dual
basis of $V^*=\Hom_\C(V,\C)$.  Then
\begin{align*}
  b_{V} :& \C \rightarrow V\otimes V^{*}, \text{ given by } 1 \mapsto
  \sum v_i\otimes v_i^* & d_{V}: & V^*\otimes V\rightarrow \C, \text{
    given by }
  f\otimes w \mapsto f(w)\\
  b_{V}' :& \C \rightarrow V^*\otimes V, \text{ given by } 1 \mapsto
  \sum K^{r-1}v_i \otimes v_i^* & d_{V}': & V\otimes V^*\rightarrow
  \C, \text{ given by } v\otimes f \mapsto f(K^{1-r}v)
\end{align*}
are duality morphisms of $\cat$.  
In \cite{Oh} Ohtsuki defines an $R$-matrix operator defined on
$V\otimes W$ by
\nota{}
\begin{equation}\label{eq:R}
  R=\qr^{H\otimes H/2} \sum_{n=0}^{\ro-1} \frac{\{1\}^{2n}}{\{n\}!}\qr^{n(n-1)/2}
  E^n\otimes F^n.
\end{equation}
where $q^{H\otimes H/2}$ is the operator given by  
$$q^{H\otimes H/2}(v\otimes v') =q^{\lambda \lambda'/2}v\otimes v'$$
for weight vectors $v$ and $v'$ of weights of $\lambda$ and
$\lambda'$.  The braiding $c_{V,W}:V\otimes W \rightarrow W \otimes V$
on $\cat$ is defined by $v\otimes w \mapsto \tau(R(v\otimes w))$ where
$\tau$ is the permutation $x\otimes y\mapsto y\otimes x$.

For each $n \in \{0,\ldots,r-1\}$ let $S_n$ be the usual
$(n+1)$-dimensional irreducible highest weight $\Ubar$-module with
highest weight $n$.  The module $S_n$ has a basis $\{s_i=F^is_0 |
i=0,\ldots,n\}$ determined by $H.s_i=(n-2i)s_i$, $E.s_0=0=F^{n+1}.s_0$
and $E.s_i=\frac{\qn i\qn{n+1-i}}{\qn1^2}s_{i-1}$.  Its quantum
dimension is given by the trace of the action of $K^{1-r}$ and so
$\qdim(S_n)=(-1)^n\frac{\qn {n+1}}{\qn1}$.

Since $q$ is a root of unity and $F^r=0$ we can
consider a larger class of finite dimensional highest weight modules:
for each $\alpha\in \C$ we let $V_\alpha$ be the $r$-dimensional
highest weight $\Ubar$-module of highest weight $\alpha + r-1$.  The
modules $V_\alpha$ has a basis $\{v_0,\ldots,v_{r-1}\}$ whose action is
given by
\begin{equation}\label{E:BasisV}
H.v_i=(\alpha + r-1-2i) v_i,\quad E.v_i= \frac{\qn i\qn{i-\alpha}}{\qn1^2}
v_{i-1} ,\quad F.v_i=v_{i+1}.
\end{equation}
All the modules $V_\alpha$ have a vanishing quantum dimensions.  They
are divided into typical and atypical modules:
\begin{description}
\item[Atypical modules] If $k \in \Xr=\Z\setminus r\Z\subset \C$ then
  $V_k$ is indecomposable but not irreducible, however it is still
  absolutely irreducible (i.e. $\End_\cat(V_k)= \C\Id_{V_k}$ since any
  endomorphism must map the highest weight vector $v_0$ to a multiple
  of itself).  In particular, if $k\in \{0,\ldots,r-1\}$ then the
  assignment sending the highest weight vector $s_0$ of $S_{r-1-k}$ to
  the vector $v_k$ of $V_k$ determines an injective homomorphism
  $\imath: S_{r-1-k} \to V_k$.  Here the submodule $S_{r-1-k}$ in
  $V_k$ is not a direct summand.  Also, if $j\in \{1-r,\ldots,0\}$
  then the assignment sending the highest weight vector $v_0$ of $V_j$
  to the highest weight vector $s_0$ of $S_{r-1+j}$ induces a
  surjective homomorphism $\pi: V_j\to S_{r-1+j}$.
\item[Typical modules] If $\alpha\in \C\setminus \Xr$ then $V_\alpha$
  is irreducible and so absolutely irreducible.  We call such modules
  \emph{typical}.  
\end{description}

Let $\A$ be the set of typical modules.  For $g\in\C/2\Z$, define
$\cat_{g}$ as the full sub-category of weight modules with weights
congruent to $g$ mod $2$.  Then it is easy to see that
$\{\cat_g\}_{g\in \C/2\Z}$ is a $\C/2\Z$-grading in $\cat$ (see
\cite{CGP}).

\subsection{The link invariants $F$ and $F'$}\label{sub:FF'} 
The well-known Reshetikhin-Turaev construction defines a $\C$-linear
functor $F$ from the category of $\cat$-colored ribbon graphs with
coupons to $\cat$.  When $L$ is a $\cat$-colored framed link then
$F(L)$ can be identified with a complex number.  When $L$ is a framed
link whose components are all colored by $S_{n}$ then $F(L)$ is the
Kauffman bracket with variable specialization
$A=q^{1/2}=\e^{i\pi/2r}$, so it is a version of the colored Jones
polynomial specialized at the root of unity $q=\e^{i\pi/r}$ (for
details see Section \ref{s:Jones}).

Vanishing quantum dimensions make the functor $F$ trivial on any
closed $\cat$-colored ribbon graph that have at least one edge colored
by a typical module.  In \cite{GPT}, the definition of $F$ is extended
to a non-trivial map $F'$ defined on closed $\cat$-colored ribbon
graphs with at least one edge colored by a typical module.  We will
now recall how one can compute $F'$.

Let $T_W$ be any $\cat$-colored (1-1)-ribbon graphs with both ends
colored by the same element $W$ of $\cat$.  If $W$ is absolutely
irreducible then $F(T_W)$ is an endomorphism of $W$ that is determined
by a scalar $\brk{T_W}$:
$$F(T_W)=\brk{T_W}\Id_{W}.$$
Let $L$ be a closed $\cat$-colored ribbon graph with an edge colored
by a typical module $V_\alpha$.  By cutting this edge we obtain a
$\cat$-colored (1-1)-tangle $T_{V_\alpha}$ whose open edges are
colored by $V_\alpha$.  Then we define
$F'(L)=\qd(V_\alpha)\brk{T_{V_\alpha}}$.  It can be shown that $F'(L)$
does not depend on the choice of the edge to be cut and yields a well
defined invariant of $L$ (see \cite{GPT}).

We will use the following proposition latter.  
\begin{proposition} \label{P:SandV} Let $T$ be a (1-1)-tangle formed
  from a closed $\cat$-colored ribbon graph and a single open
  uncolored component.  Let $T_W$ be $T$ where the open component is
  colored by $W$.
  We have the following equality of scalars:
  $$\brk{T_{S_{r-1-k}}}=\brk{T_{V_k}}, \text{ for }k \in \{0,\ldots,r-1\}$$
  and 
  $$\brk{T_{S_{r-1+j}}}=\brk{T_{V_j}}, \text{ for }j \in \{1-r,\ldots,0\}.$$
\end{proposition}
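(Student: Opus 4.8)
The plan is to exploit the structure of the atypical module $V_k$ as described in the excerpt, namely that it contains $S_{r-1-k}$ as a submodule (via $\imath$) and surjects onto $S_{r-1+j}$ (via $\pi$), and to track how the invariant behaves under these maps. The key idea is \emph{functoriality}: since $F$ is a $\C$-linear ribbon functor, any morphism of $\cat$-modules commutes with the endomorphisms $F(T_W)$ obtained by coloring the open strand. First I would treat the two statements separately, as they involve the submodule and quotient structures respectively.

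For the first equality, set $W=V_k$ with $k\in\{0,\ldots,r-1\}$, and consider the injection $\imath\colon S_{r-1-k}\hookrightarrow V_k$. The strategy is to use that $\imath$ is a $\Ubar$-module homomorphism, hence $F$ applied to the tangle intertwines the two colorings: naturality of $F$ gives $F(T_{V_k})\circ\imath = \imath\circ F(T_{S_{r-1-k}})$. Since both $S_{r-1-k}$ and $V_k$ are absolutely irreducible, $F(T_{S_{r-1-k}})=\brk{T_{S_{r-1-k}}}\Id$ and $F(T_{V_k})=\brk{T_{V_k}}\Id$; plugging these in yields $\brk{T_{V_k}}\,\imath = \brk{T_{S_{r-1-k}}}\,\imath$, and since $\imath$ is injective (in particular nonzero) we may cancel it to obtain $\brk{T_{V_k}}=\brk{T_{S_{r-1-k}}}$. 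The naturality statement must be justified carefully: $F$ is defined on $\cat$-colored ribbon graphs with coupons, and a module homomorphism $\imath$ can be inserted as a coupon, so that sliding it along the open strand relates the two colored tangles. This is the content of the functoriality of the Reshetikhin--Turaev construction, which is already invoked in Subsection~\ref{sub:FF'}.

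For the second equality, set $W=V_j$ with $j\in\{1-r,\ldots,0\}$ and use instead the surjection $\pi\colon V_j\twoheadrightarrow S_{r-1+j}$. By the same naturality principle, $\pi\circ F(T_{V_j})=F(T_{S_{r-1+j}})\circ\pi$, which gives $\brk{T_{V_j}}\,\pi = \brk{T_{S_{r-1+j}}}\,\pi$; since $\pi$ is surjective (hence nonzero), cancellation yields $\brk{T_{V_j}}=\brk{T_{S_{r-1+j}}}$. The argument is formally dual to the first one, trading the submodule inclusion for the quotient projection.

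The main obstacle I anticipate is making the naturality step fully rigorous when the intertwiner is \emph{not} an isomorphism. Functoriality of $F$ is transparent for isomorphisms of colors, but here $\imath$ and $\pi$ are only injective or surjective, and the tangle $T$ is a genuine $(1,1)$-tangle (built from a closed ribbon graph plus one open strand) rather than a braid. One must verify that inserting $\imath$ (resp.\ $\pi$) as a coupon at the bottom (resp.\ top) of the open strand and using the invariance of $F$ under isotopy and under the defining relations of ribbon categories indeed produces the claimed commutation relation. Concretely, the scalar $\brk{T_W}$ is extracted via $F(T_W)=\brk{T_W}\Id_W$, and one needs that the closed part of the graph interacts with the open strand only through the $\Ubar$-action, so that a module map on the open strand genuinely conjugates the operator. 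I expect this to follow from the general naturality of the functor $F$ with respect to coupons colored by arbitrary morphisms of $\cat$, but this is precisely the point where the proof must appeal to the established properties of the Reshetikhin--Turaev functor rather than to a bare computation.
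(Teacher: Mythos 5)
Your proof is correct and takes essentially the same route as the paper: the paper likewise inserts $\imath$ (resp.\ $\pi$) as a coupon, slides it along the open strand of the $(1,1)$-tangle using that the category of $\cat$-colored ribbon graphs is a ribbon category, invokes absolute irreducibility so that $F(T_{V_k})$ and $F(T_{S_{r-1-k}})$ are scalars, and cancels the nonzero intertwiner. The naturality step you flag as the potential obstacle is exactly what the paper dispatches with this coupon calculus (noting $F(c(f))=f$ and that $T_{V_k}\circ c(\imath)$ and $c(\imath)\circ T_{S_{r-1-k}}$ are equal as ribbon graphs), so no gap remains.
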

\begin{proof}
  In this proof we use the language of coupons, for more details see
  \cite{Tu}.  In particular, a morphism $f:V\to W$ can be represented
  by a coupon $c(f)$, which is a box with arrows:
  $c(f)=\def\objectstyle{\scriptstyle} \def\labelstyle{\scriptstyle}
  \vcenter{\xymatrix @1 @-1.2pc 
    {\ar[d]^{W}\\ *+[F]\txt{\,$f$\, } \ar[d]^{V}\\ \: }} $.  By
  definition of $F$, we have $F(c(f))=f$.  By fusing this coupon to
  the bottom of the (1-1)-tangle $T_W$ we obtain a ribbon graph which
  we denote by $ T_W\circ c(f)$.  Similarly, we can fuse $c(f)$ to the
  top of the tangle $T_V$ to obtain a ribbon graph $c(f)\circ T_V$.

  From the discussion above about atypical modules we have the
  injection $\imath: S_{r-1-k} \to V_k$ and surjection $\pi: V_j\to
  S_{r-1+j}$, for $k \in \{0,\ldots,r-1\}$ and $j \in
  \{1-r,\ldots,0\}$.  Thus, as explained in the previous paragraph we
  can consider the ribbon graphs $T_{V_k}\circ c({\imath})$ and
  $c({\imath}) \circ T_{S_{r-1-k}}$.  Since the category of
  $\cat$-colored ribbon graphs is a ribbon category we have that
  $T_{V_k}\circ c({\imath})$ and $c({\imath}) \circ T_{S_{r-1-k}}$ are
  equal as ribbon graphs, so their images are equal under $F$.
  Combining this with the fact that $F(T_{V_k})$ and
  $F(T_{S_{r-1-k}})$ are scalars endomorphisms we have
  $$\brk{T_{V_k}} \imath=\brk{T_{V_k}} F(c({\imath}))=F(T_{V_k}\circ c({\imath}))
  =F(c({\imath}) \circ T_{S_{r-1-k}})=\imath \brk{ T_{S_{r-1-k}}}.$$ Thus,
  we have $\brk{T_{V_k}} =\brk{ T_{S_{r-1-k}}}.$ Similarly, we have
  $\brk{T_{S_{r-1+j}}}=\brk{T_{V_j}}$.
\end{proof}

\subsection{Comparison with the Jones polynomials}\label{s:Jones}
In this paper, by the colored Jones polynomial, we mean the Kauffman
bracket version which is an invariant of framed oriented links
independent of their orientation.  Let $L=L_1\sqcup \cdots \sqcup L_k
\subset S^3$ be a framed link and $J(L)\in \C[q^{\pm \frac{1}{2}}]$ be
its \emph{Jones polynomial} determined by the following skein
relations:
\begin{equation}
  \label{eq:kauffman1}
  q^{\frac{1}{2}}J\left(\epsh{fig26}{9ex}\right)-q^{-\frac{1}{2}}
  J\left(\epsh{fig27}{9ex}\right)=
  (q-q^{-1})J\left(\epsh{fig32}{9ex}\ \right), 
\end{equation}
\begin{equation}
  \label{eq:kauffman2}
  J\left(\epsh{fig16}{9ex}\right)=
  -q^{3/2}J\left(\epsh{fig10}{9ex}\ \right) \ \  \text{and}\ \ 
  J\left({\epsh{fig12}{6ex}}\right)= -q-q^{-1}. 
\end{equation}

More generally, if each $L_i$ is colored by an integer $n_i\geq 0$,
then roughly speaking, one defines the $\vec{n}^{th}$-colored Jones
polynomial $J_{\vec{n}}(L)$ as a linear combination of Jones
polynomials of links obtained by taking parallels of each component of
$L$ at most $n_i$ times.
More precisely, one identifies the tubular neighborhood of each component
$L_i$ with the product $S^1\times [-1,1]\times [-1,1]$ (using the
framing of $L_i$ and an arbitrary orientation), and defines links
$L_i^k=S^1\times \{\frac{0}{k},\frac{1}{k},\ldots
,\frac{k-1}{k}\}\times \{0\}$ and adopts the notation that $L_i^k\cdot
L_i^{h}=L_i^{k+h}$. Then one recursively defines a linear combination
of links parallel to $L_i$ as follows:
\begin{equation}\label{eq:tchebychev}
  T_n(L_i):=L^1_i\cdot T_{n-1}(L_i)-T_{n-2}(L_i)\ \ \ \  
  {\rm and}\ \  \ T_0(L_i)=\emptyset,\ T_1(L_i)=L_i.
\end{equation}
Finally, $J_{\vec{n}}(L)$ is defined as the linear combination of the
Jones polynomials of the links obtained by replacing $L_i$ with
$T_{n_i}(L_i)$. Clearly the above defined Jones polynomial corresponds
to the case when $n_i=1$ for all $ i$. The following holds:
\begin{prop}\label{P:JonesKauf}
  Let $L=L_1\sqcup \cdots \sqcup L_k\subset S^3$ be a framed oriented
  link and let $\vec{n}=(n_1,...,n_k)$ be a tuple of integers all
  greater or equal to 0.  Let $L_{\vec{n}}$ be the link $L$ such that
  $L_{i}$ is colored by $n_i$, for all $i$.  Similarly, let $L_S$ be
  the link $L$ such that $L_i$ is colored by $S_{n_i}$ where $S_{n_i}$
  is the simple module defined in Subsection \ref{SS:QuantMod}.  Then
  $$J_{\vec{n}}(L_{\vec{n}})|_{q=\exp(\frac{i\pi}{r})}=F(L_S).$$
\end{prop}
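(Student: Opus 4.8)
The plan is to show that both sides of the claimed identity satisfy the same recursion and the same base cases, so that they must agree. The key observation is that the quantum invariant $F(L_S)$ is computed by the Reshetikhin-Turaev functor, which is multilinear and behaves functorially with respect to the cabling operation described in Equation \eqref{eq:tchebychev}. First I would recall that for a single strand colored by the fundamental module $S_1$ (the two-dimensional representation), the functor $F$ reproduces the Kauffman bracket skein relations \eqref{eq:kauffman1} and \eqref{eq:kauffman2}: this is the standard computation of the $R$-matrix \eqref{eq:R} on $S_1\otimes S_1$ together with the twist and the quantum dimension $\qdim(S_1)=-q-q^{-1}$, matching the value $-q-q^{-1}$ assigned to the unknot in \eqref{eq:kauffman2}. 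This establishes the statement in the base case $\vec n=(1,\ldots,1)$.

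Next I would address the general coloring by reducing it to the fundamental case via the Chebyshev recursion. The essential algebraic fact is that inside the representation ring, the object $S_n$ satisfies the Chebyshev-type fusion rule $S_1\otimes S_n \cong S_{n+1}\oplus S_{n-1}$ for $0\le n\le r-2$, which mirrors exactly the recursion $T_n=L^1\cdot T_{n-1}-T_{n-2}$ defining $J_{\vec n}$ in \eqref{eq:tchebychev}. Concretely, I would argue that coloring a component by $S_n$ and then applying $F$ is the same as taking the idempotent (Jones-Wenzl-type) projector onto $S_n$ inside the $n$-fold tensor power of $S_1$, and that this projector is precisely the one whose closure computes $T_n$. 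Thus on each component the colored evaluation $F$ with color $S_{n_i}$ equals the linear combination of parallel (cabled) diagrams prescribed by $T_{n_i}$, each evaluated by $F$ with all strands colored by $S_1$. Combining this component-by-component with the multilinearity of $F$ yields $F(L_S)$ as the same linear combination of fundamental evaluations that defines $J_{\vec n}(L_{\vec n})$.

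The main obstacle I anticipate is the careful bookkeeping of the variable specialization and the framing/twist conventions. The Jones polynomial in \eqref{eq:kauffman1}--\eqref{eq:kauffman2} is stated in the variable $q$ with the writhe-correction factor $-q^{3/2}$, whereas $F$ is defined using the braiding $c_{V,W}$ from the $R$-matrix with $A=q^{1/2}=\e^{i\pi/2r}$ as noted in Subsection \ref{sub:FF'}. I would need to verify that the self-twist $\theta_{S_1}$ computed from \eqref{eq:R} equals exactly the factor appearing in the framing relation \eqref{eq:kauffman2}, and that the quantum trace using $K^{1-r}$ reproduces the unframed normalization; these are the points where signs and half-integer powers of $q$ can easily be mismatched. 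The reason the specialization $q=\exp(i\pi/r)$ appears on the left is simply that $F$ is defined at this specific root of unity, so once the generic identity of $F$ with the Kauffman bracket is established the claim follows by evaluation, and I would confirm that no poles or vanishing denominators obstruct this specialization for the colors $n_i\in\{0,\ldots,r-1\}$ (indeed $\qdim(S_n)=(-1)^n\qN{n+1}$ is finite in this range).
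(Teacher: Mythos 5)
Your proposal is correct and follows essentially the same route as the paper: verify the Kauffman relations \eqref{eq:kauffman1}--\eqref{eq:kauffman2} for the color $S_1$ by direct computation with the $R$-matrix \eqref{eq:R}, the twist and $\qdim(S_1)=-q-q^{-1}$, then reduce arbitrary colors to $S_1$-colored cables via the fusion rule $S_{n-1}\otimes S_1\simeq S_n\oplus S_{n-2}$ for $2\le n<r$, which mirrors the Chebyshev recursion \eqref{eq:tchebychev}. The only difference is cosmetic: the paper applies this decomposition directly by induction on $n$, so your intermediate appeal to Jones--Wenzl projectors (whose existence and closure formula would themselves require the same inductive argument) can simply be dispensed with.
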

\begin{proof}
  First, assume that $n_i=1$, for all $i$.  In this case, we will
  prove that the relations of Equations \eqref{eq:kauffman1} and
  \eqref{eq:kauffman2} hold.
  We start by recalling that $S_1$ is spanned by two vectors $s_0,s_1$
  with $H(s_i)=1-2i, \ K(s_i)=q^{1-2i}s_i$ and $E(s_1)=0=F(s_2)$ while
  $E(s_2)=s_1$ and $F(s_1)=s_2$.  The second relation of Equation
  \eqref{eq:kauffman2} is a consequence of the formula for the quantum
  dimension $\qdim(S_1)=-q-q^{-1}$ given above.  The first relation
  follows from the fact that the inverse of the twist on $S_1$ is
  given by the action of $\theta=K^{\ro-1}\sum_{n=0}^{\ro-1}
  \frac{\{1\}^{2n}}{\{n\}!}\qr^{n(n-1)/2} (-KF)^n\qr^{-H^2/2}E^n$ (see
  \cite{CGP}).  To see that Equation \eqref{eq:kauffman1} holds,
  recall the braiding $c_{S_1,S_1}$ is defined by $v\otimes w \mapsto
  \tau(R(v\otimes w))$ where $R$ is the $R$-matrix and $\tau$ is the
  permutation $x\otimes y\mapsto y\otimes x$.  Since $E^2$ and $F^2$
  act by zero on $S_1$ we have $c_{S_1,S_1}$ and $c_{S_1,S_1}^{-1}$
  are determined by $\tau\circ \left(q^{\frac{H\otimes
        H}{2}}(\Id+(q-q^{-1})E\otimes F)\right)$ and
  $(\Id-(q-q^{-1})E\otimes F)q^{-\frac{H\otimes H}{2}}\circ \tau$,
  respectively.
  Thus, Equation \eqref{eq:kauffman1} follows from the following
  direct computations:
  $$(q^{\frac{1}{2}}\tau\circ R-q^{-\frac{1}{2}} R^{-1}\circ
  \tau)(s_0\otimes s_0)=(q-q^{-1}) s_0\otimes s_0,\
  (q^{\frac{1}{2}}\tau\circ R-q^{-\frac{1}{2}}R^{-1}\circ
  \tau)(s_0\otimes s_1)=(q-q^{-1}) s_0\otimes s_1,$$
  $$(q^{\frac{1}{2}}\tau\circ R-q^{-\frac{1}{2}}R^{-1}\circ \tau) 
  (s_1\otimes s_1) =(q-q^{-1}) s_1\otimes s_1,\
  (q^{\frac{1}{2}}\tau\circ R -q^{-\frac{1}{2}}R^{-1}\circ \tau)
  (s_1\otimes s_0)=(q-q^{-1})s_1\otimes s_0.$$ 
 
  Finally, to prove the statement in general it is sufficient to remark
  that the standard tensor decomposition of $S_1^{\otimes n}$ as a sum
  of copies of $S_i$ with $i\leq n$ is still valid for $n<r$ in
  $\cat$. To prove this it is sufficient to remark that if $2\leq n<r$
  then $S_{n-1}\otimes S_1\simeq S_{n}\oplus S_{n-2}$ and arguing by
  induction.  Hence the formula \eqref{eq:tchebychev} expressing
  $T_n(L)$ translates this decomposition algebraically expressing
  $F(L)$ (with $L$-colored by $n$) as a linear combination of
  invariants of cables of $L$ whose components are all colored by
  $S_1$.  Thus, the theorem follows.
\end{proof}

\subsection{The 3-manifold invariants $\Nr^0$ and $\WRT$}
In this subsection, we fix an integer $r\ge2$ with $r\notin4\Z$.  We start by
recalling some definitions given in \cite{CGP}.  Let $M$ be a compact
connected oriented 3--manifold, $T$ a $\cat$-colored ribbon graph in $M$ and
$\coh\in H^{1}(M\setminus T,\C/2\Z)$.  Let $L$ be an oriented framed link in
$S^3$ which represents a surgery presentation of $M$.  The map $g_\coh$
defined on the set of edges of $L\cup T$ with values in $\C/2\Z$ defined by
$g_\coh(e_i)=\coh(m_i)$, where $m_i$ is a meridian of $e_i$, is called the
\emph{$\C/2\Z$-coloring} of $L\cup T$ induced by $\coh$.

\begin{definition}\label{def:adm} 
  Let $M$, $T$ and $\coh$ be as above.
  \begin{enumerate}
  \item We say that $(M,T,\coh)$ is {\em a compatible triple} if for each edge
    $e$ of $T$ its coloring is in $\cat_{g_\coh(m_e)}$ where $m_e$ is a
    meridian of $e$.
  \item A compatible triple is \emph{$T$-admissible} if there exists
    an edge of $T$ colored by $V_\alpha\in \A$.
  \item A link $L\subset S^3$ which is a surgery presentation for a
    compatible triple $(M,T,\coh)$
    is \emph{computable} if one of the two following conditions holds:
  \begin{enumerate}
  \item $L\neq \emptyset$ and $g_\coh(L_i) \in ({\C/2\Z})\setminus
    (\Zd)$ for all components $L_i$ of $L$, or
  \item $L=\emptyset$ and there exists an edge of $T$ colored by
    $V_\alpha\in \A$.
  \end{enumerate}
\end{enumerate}
\end{definition}

Recall the set $\Hr=\{1-r,3-r,\ldots,r-1\}$ defined in \eqref{eq:Hr}.
For $\alpha\in \C\setminus\Z$ we define the Kirby color
$\Omega_{{\alpha}}$ as the formal linear combination
\begin{equation}
  \label{eq:Om}
  \Omega_{{\alpha}}=\sum_{k\in \Hr}\qd(\alpha+k)V_{\alpha+k}.
\end{equation}
If $\wb{\alpha}$ is the image of $\alpha$ in $\C/2\Z$ we say
that $\Omega_{{\alpha}}$ has degree $\wb{\alpha}$.  We can ``color'' a
knot $K$ with a Kirby color $\Omega_{{\alpha}}$: let
$K({\Omega_{{\alpha}}})$ be the formal linear combination of knots
$\sum_{k\in \Hr} \qd(\alpha+k) K_{\alpha+k}$ where $K_{\alpha+k}$ is the
knot $K$ colored with $V_{\alpha+k}$.  If
$\wb{\alpha}\in\C/2\Z\setminus\Zd$, by $\Omega_{\wb{\alpha}}$, we
mean any Kirby color of degree $\wb\alpha$.
Let $\Delta_-$ and $\Delta_+$ be the scalars given by:
  $$\Delta_-=\wb{\Delta_+}=\left\{
  \begin{array}{lll}
    i(r\qr)^{\frac32}&&
    \text{ if $\ro\equiv1$ mod }4\\
  (i-1)(r\qr)^{\frac32}&&
    \text{ if $\ro\equiv2$ mod }4\\ 
    - (r\qr)^{\frac32}&&
    \text{ if $\ro\equiv3$ mod }4.\\
  \end{array}\right. $$
Next we recall the main theorems of \cite{CGP}.  

\begin{theorem}[\cite{CGP}]\label{T:sl2CompSurgInv}
  If $L$ is a link which gives rise to a computable surgery
  presentation of a compatible triple $(M,T,\coh)$ then
  $$\Nr(M,T,\coh)=\dfrac{F'(L\cup T)}{\Delta_+^{p}\ \Delta_-^{s}}$$
  is a well defined topological invariant (i.e. depends only of the
  homeomorphism class of the triple $(M,T,\coh)$), where $(p,s)$ is
  the signature of the linking matrix of the surgery link $L$ and for
  each $i$ the component $L_i$ is colored by a Kirby color of degree
  $g_{\coh}(L_i)$.  
\end{theorem}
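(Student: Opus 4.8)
The plan is to prove well-definedness by the surgery calculus: realize the claimed invariance as invariance under the moves relating two surgery presentations of the same triple, and then check each move separately. Every compatible triple $(M,T,\coh)$ admits a computable surgery presentation, and by the $\cat$-colored, graph-enhanced version of Kirby's theorem (see \cite{Tu}) any two computable presentations of homeomorphic triples are related by a finite sequence of ambient isotopies, handle slides (the second Kirby move), and $\pm1$-framed stabilizations (the first Kirby move). Since $F'(L\cup T)$ is already a well-defined isotopy invariant of the $\cat$-colored framed graph $L\cup T\subset S^3$, independent of the cut edge (recalled in Subsection \ref{sub:FF'}), and since the signature $(p,s)$ depends only on the linking matrix, the quotient $F'(L\cup T)/(\Delta_+^p\Delta_-^s)$ is automatically invariant under isotopy. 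It therefore remains to treat the two Kirby moves.

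The decisive ingredient, and what I expect to be the main obstacle, is the \emph{sliding property} of the Kirby color: for every $W\in\cat$, encircling a $W$-colored edge by a meridian colored by $\Omega_{\alpha}=\sum_{k\in\Hr}\qd(\alpha+k)V_{\alpha+k}$ should equal a scalar \emph{independent of $W$} times $\Id_W$. I would establish this by decomposing each $W\otimes V_{\alpha+k}$, evaluating the encirclement by means of the $R$-matrix and the twist of Subsection \ref{SS:QuantMod}, and verifying that the weights $\qd(\alpha+k)$ are precisely those for which the sum over $k\in\Hr$ collapses to a multiple of the identity. Granting this lemma, invariance under a handle slide follows by the standard argument: a slide of $L_j$ over the $\Omega_{\alpha}$-colored component $L_i$ pushes a parallel family of strands through a meridian of $L_i$, and the sliding property shows that the graphs before and after the slide have equal image under $F'$. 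The difficulty is genuinely the non-semisimple setting: the ordinary quantum dimensions of the $V_{\alpha+k}$ all vanish, so the classical ``sum over simple objects'' computation is unavailable and must be replaced throughout by the modified dimension $\qd$ and the partial-trace property that define $F'$.

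For the first Kirby move I would compute the contribution of a $\pm1$-framed unknot colored by $\Omega_{\alpha}$. Cutting this unknot produces a single framing curl, whose bracket $\brk{\cdot}$ is the twist eigenvalue $\theta_{\alpha+k}^{\pm1}$ on $V_{\alpha+k}$, so its contribution is the Gauss sum $\sum_{k\in\Hr}\qd(\alpha+k)\,\theta_{\alpha+k}^{\pm1}$; a direct evaluation identifies this sum with $\Delta_\pm$, reproducing the case-by-case formula for $\Delta_\pm$ modulo $4$. A stabilization thus multiplies $F'(L\cup T)$ by $\Delta_+$ or $\Delta_-$ while changing the signature $(p,s)$ by $(+1,0)$ or $(0,+1)$, so the quotient is unchanged. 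The point requiring care here is again the role of the modified dimension: since the quantum dimensions vanish one cannot simply multiply by the ordinary invariant of a split unknot, and one must keep $\qd$ on a component of the original link so that the new component contributes exactly the twist-generated scalar $\Delta_\pm$. Finally, I would reconcile the two clauses of computability in Definition \ref{def:adm} — in particular the case $L=\emptyset$ with a typical edge of $T$ — by comparing a given presentation with one for which $L\neq\emptyset$, using the behaviour of $F'$ recorded in Subsection \ref{sub:FF'}.
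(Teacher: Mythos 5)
First, a point of reference: this paper offers no proof of Theorem \ref{T:sl2CompSurgInv} at all --- it is imported from \cite{CGP} --- so your sketch has to be measured against the proof there. Your skeleton (Kirby calculus, a sliding lemma for the Kirby color, a Gauss-sum computation for stabilization) is the right shape, but your ``decisive ingredient'' is false as stated, and the failure is exactly the non-semisimple phenomenon you were warned about. It is not true that encircling \emph{every} $W\in\cat$ by an $\Omega_{\alpha}$-colored meridian gives a $W$-independent scalar times $\Id_W$: the decompositions $V_{\alpha+k}\otimes W\cong\bigoplus_{l\in\Hr}V_{\alpha+\mu+l}$ on which any such computation rests hold only when the degree $\mu$ of $W$ satisfies $\alpha+\mu\notin\Z$; when the total degree is integral, atypical and non-semisimple summands appear and the argument collapses. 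What \cite{CGP} actually proves is a \emph{graded} sliding property valid only in generic (non-integral) degree --- this is precisely why Definition \ref{def:adm} requires $g_\coh(L_i)\in(\C/2\Z)\setminus\Zd$ on every surgery component --- and after sliding $L_j$ over $L_i$ the degree of the Kirby color on $L_i$ changes, consistently with the new induced coloring $g_\coh$; the theorem's phrase ``any Kirby color of degree $g_{\coh}(L_i)$'' is what absorbs this, and your formulation suppresses it entirely.

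Two further gaps are genuine, not bookkeeping. (1) Your first-Kirby-move computation is incoherent in this setting: the meridian of a $\pm1$-framed unknot split from the rest of the link is killed in $H_1$ of the surgered manifold, so its induced color is $\bar{0}\in\Zd$ and no \emph{computable} presentation ever contains such a component; worse, since typical modules have vanishing quantum dimension, $F'(L\sqcup u)=F'(L)\,F(u)=0$ for a split $\Omega_{\alpha}$-colored unknot $u$, so ``stabilization multiplies $F'$ by $\sum_{k\in\Hr}\qd(\alpha+k)\theta_{\alpha+k}^{\pm1}$'' cannot be read off a split component. In \cite{CGP} the scalar $\Delta_\pm$ is instead extracted from a $\pm1$-framed $\Omega$-colored unknot \emph{encircling} a typical strand, and stabilizations are performed together with slides so as to stay inside the computable class. (2) You invoke a graph-enhanced Kirby theorem relating any two computable presentations by moves, but the standard Kirby sequence passes through intermediate presentations that need not be computable, and the $F'$-based formula is only controlled on computable ones; a substantial portion of the proof in \cite{CGP} consists precisely in showing that two computable presentations of the same triple can be joined by a chain of moves that remains computable throughout. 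Finally, well-definedness of the quotient requires $\Delta_\pm\neq0$, which you never verify; this is where the standing hypothesis $r\notin4\Z$ (fixed just before the theorem, and visible in the case-by-case formula for $\Delta_\pm$) enters --- for $r\equiv0\bmod 4$ the relevant Gauss sums vanish and the normalization is meaningless.
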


For $\alpha \in \srcol$, let $u_\alpha$ be the unknot in $S^3$ colored
by $V_\alpha$.  Let $\coh_\alpha$ be the unique element of
$H^1(S^3\setminus u_\alpha;\C/2\Z)$ such that
$(S^3,u_\alpha,\coh_\alpha)$ is a compatible triple.  
\begin{theorem}[\cite{CGP}]\label{T:N0sl2} 
  Let $(M,T,\coh)$ be a compatible triple.  Define
  $$
  \Nr^{0}(M,T,\coh)=\frac{\Nr((M,T,\coh)\#(S^3,u_\alpha,\coh_\alpha))}
  {\qd(\alpha)}.
  $$ 
  Then $\Nr^{0}(M,T,\coh)$ is a well defined topological invariant
  (i.e. depends only of the homeomorphism class of the compatible
  triple $(M,T,\coh)$).  Moreover, if $(M,T,\coh)$  has a computable surgery
  presentation then $\Nr^0(M,T,\coh)=0$.
\end{theorem}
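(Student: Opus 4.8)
\section*{Proof proposal}

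The statement packages two claims: that the right-hand side is independent of the auxiliary color $\alpha$ (so that $\Nr^0$ is well defined), and that it vanishes as soon as $(M,T,\coh)$ admits a computable surgery presentation. The plan is to reduce both to the split-union multiplicativity of $F'$ together with the surgery formula of Theorem~\ref{T:sl2CompSurgInv}. The first observation is that the triple $(M,T,\coh)\#(S^3,u_\alpha,\coh_\alpha)$ contains the edge $u_\alpha$ colored by the typical module $V_\alpha$, hence is $T$-admissible; by \cite{CGP} this guarantees that $\Nr$ of the connected sum is a well-defined topological invariant. Since $\qd(\alpha)\ne0$ whenever $V_\alpha$ is typical, the quotient makes sense, and the only genuinely new point is its independence of $\alpha$.

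To isolate the $\alpha$-dependence I would fix a surgery presentation $L$ of $M$, with its components colored by Kirby colors of the degrees $g_\coh(L_i)$ and with linking-matrix signature $(p,s)$. The connected sum is then presented by the same link $L$ together with the graph $T\sqcup u_\alpha$, in which the unknot $u_\alpha$ is a \emph{split}, typical component. Granting that the surgery formula applies to this admissible triple (addressed in the last paragraph) and applying the relation $F'(L_1\sqcup L_2)=F'(L_1)F(L_2)$ with $L_1=u_\alpha$ (which lies in the domain of $F'$) and $L_2=L\cup T$, one gets
$$F'\bp{L\cup T\sqcup u_\alpha}=F'(u_\alpha)\,F(L\cup T)=\qd(\alpha)\,F(L\cup T),$$
where $F'(u_\alpha)=\qd(\alpha)$ because cutting the unknot produces the trivial (1-1)-tangle, whose bracket is $1$. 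Dividing by $\Delta_+^{p}\Delta_-^{s}$ and then by $\qd(\alpha)$ yields
$$\Nr^0(M,T,\coh)=\frac{F(L\cup T)}{\Delta_+^{p}\,\Delta_-^{s}},$$
an expression in which $\alpha$ no longer appears. This proves well-definedness.

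For the second claim, suppose $L$ is a \emph{computable} presentation. By Definition~\ref{def:adm} this means either that every component of $L$ has non-integral degree, so that $L\cup T$ carries a typical edge, or that $L=\emptyset$ and $T$ itself has an edge colored by some $V_\alpha\in\A$. In both cases $L\cup T$ contains an edge of vanishing quantum dimension, so the Reshetikhin-Turaev functor $F$ kills it: $F(L\cup T)=0$. By the formula just obtained, $\Nr^0(M,T,\coh)=0$. Equivalently, one may cut the typical edge already present in $L\cup T$ and write $F'(L\cup T\sqcup u_\alpha)=F'(L\cup T)F(u_\alpha)=F'(L\cup T)\,\qdim(V_\alpha)=0$, which exhibits the vanishing of the numerator directly.

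The delicate step --- and the one I expect to be the real obstacle --- is the passage from $\Nr$ of the admissible connected sum to the surgery expression $F(L\cup T)/(\Delta_+^{p}\Delta_-^{s})$. If $\coh$ takes integral values the components of $L$ may be forced to have integral degree, so the presentation $(L,\,T\sqcup u_\alpha)$ need not itself be computable in the strict sense of Definition~\ref{def:adm}, and Theorem~\ref{T:sl2CompSurgInv} does not apply verbatim. To make the argument rigorous I would invoke the connected-sum multiplicativity of $\Nr$ from \cite{CGP}, namely $\Nr(A\#B)=\Phi(A)\,\Nr(B)$ for admissible $B$, where $\Phi(A)=F(L_A\cup T_A)/(\Delta_+^{p_A}\Delta_-^{s_A})$ is the value obtained by cutting the typical edge on the $B$-side and evaluating the closed $A$-side with $F$; taking $B=(S^3,u_\alpha,\coh_\alpha)$, for which $\Nr(B)=\qd(\alpha)$, recovers the displayed formula. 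This multiplicativity is equivalent to the Kirby-move invariance of $F(L\cup T)/(\Delta_+^{p}\Delta_-^{s})$ and is precisely the content imported from \cite{CGP}; everything else is the bookkeeping above.
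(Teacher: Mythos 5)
Note first that the paper does not prove this statement: Theorem \ref{T:N0sl2} is imported from \cite{CGP} without proof, so your proposal must stand on its own. Its second half does: when $(M,T,\coh)$ has a computable presentation $L$, the split presentation $(L,\,T\sqcup u_\alpha)$ of the connected sum is again computable (in case (a) of Definition \ref{def:adm} the degrees $g_\coh(L_i)$ are non-integral; in case (b) $L=\emptyset$ and $T$ already carries a typical color), Theorem \ref{T:sl2CompSurgInv} applies, and $F'(L\cup T\sqcup u_\alpha)=\qd(\alpha)F(L\cup T)=0$ because a Kirby color of non-integral degree consists of typical modules, on which $F$ vanishes. That argument for the ``moreover'' clause is correct.

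The well-definedness half, however, has a genuine gap, and it sits exactly where you suspected. Your $\alpha$-free formula $\Nr^0(M,T,\coh)=F(L\cup T)/(\Delta_+^{p}\Delta_-^{s})$ is derived only when the split presentation is computable, and in that case it is the vacuous identity $0=0$. In the essential case --- $\coh$ taking values in $\Zd$, precisely the setting of Conjecture \ref{C:MainConj} where $\Nr^0$ is interesting and generally nonzero --- Kirby colors of integral degree do not exist (the $\qd(\alpha+k)$ in \eqref{eq:Om} hit the poles of $\qd$ on $\Xr$), the split presentation is not computable, and to evaluate $\Nr$ of the connected sum one must slide $u_\alpha$ over the components of $L$; this shifts the degrees to $g_\coh(L_i)-\alpha$ and entangles $\alpha$ into every Kirby color and every crossing. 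This is visible in the paper's own proof of Theorem \ref{ThMknot}, where the computable presentation is $DK_{(\Omega_{e-\alpha},V_\alpha)}$, not a split union. Your fallback --- multiplicativity $\Nr(A\#B)=\Phi(A)\,\Nr(B)$ with $\Phi(A)=F(L_A\cup T_A)/(\Delta_+^{p_A}\Delta_-^{s_A})$ --- is not a result of \cite{CGP} and is in fact false: it would make $\Nr^0(M,\emptyset,\omega)$ proportional to $\WRT_r(M,\emptyset,\omega)$ with an $M$-independent constant, whereas Theorem \ref{ThMknot} gives the ratio $|f|=\ord(H_1(M;\Z))$, which depends on $M$; the factor $\ord(H_1)$ in Conjecture \ref{C:MainConj} is exactly the obstruction to the decoupling you assume (the $A$-side and $B$-side do not separate once $u_\alpha$ has been slid over $L$). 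The $\alpha$-independence of $\Nr\big((M,T,\coh)\#(S^3,u_\alpha,\coh_\alpha)\big)/\qd(\alpha)$ is the genuinely non-trivial content of the theorem, established in \cite{CGP} by handle-slide invariance with typical colors and continuity arguments of the kind rehearsed in Section \ref{S:ConjKnot}; it cannot be obtained by splitting off $u_\alpha$.
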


Let us also give a definition of the refined Witten-Reshetikhin-Turaev
invariants $\WRT(M,T,\coh)$.  The definition is based on the fact that
the Kauffman bracket version of the colored Jones polynomial can be
computed through $F$ 
 (see Proposition \ref{P:JonesKauf}).  

We define the Kirby colors of degree
$\wb0$ and $\wb1$ respectively by
$$\Omega^{RT}_0:=\sum_{0\le j\le r-2}^{j\ \rm{even}} \qN{j+1}S_j 
\quad\text{and}\quad\Omega^{RT}_1:=\sum_{0\le j\le r-2}^{j\ \rm{odd}}
-\qN{j+1}S_j$$
\begin{lemma} \label{L:Dsot}Let $\Delta^{\sot}_\pm=F(u_{\pm1})$ where $u_{\pm1}$ is
  the unknot with framing $\pm1$ colored by $\Omega^{RT}_0$.  Then 
  \[\Delta^{\sot}_+=\frac{\Delta_+}{\qn1r}\quad\text{and} 
  \quad\Delta^{\sot}_-=\wb{\Delta^{\sot}_+}=-\frac{\Delta_-}{\qn1r}.\]
  In particular, in both case, $\Delta^{\sot}_\pm\neq0$.
\end{lemma}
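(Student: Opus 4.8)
The plan is to compute $\Delta^{\sot}_+ = F(u_{+1})$ directly, where $u_{+1}$ is the $+1$-framed unknot colored by the Kirby color $\Omega^{RT}_0 = \sum_{0\le j\le r-2,\ j\text{ even}} \qN{j+1} S_j$, and to compare this expression with the known formula for $\Delta_+$. Since $F$ is linear in the color, I would write $\Delta^{\sot}_+ = \sum_{j\text{ even}} \qN{j+1}\, F\big((S_j)_{+1}\big)$, where $(S_j)_{+1}$ denotes the $+1$-framed unknot colored by $S_j$. The invariant of a framed unknot colored by a simple module is the product of its quantum dimension and its twist coefficient: $F\big((S_j)_{+1}\big) = \theta_{S_j}\,\qdim(S_j)$, where $\theta_{S_j}$ is the scalar by which the ribbon twist acts on $S_j$ and $\qdim(S_j) = (-1)^j \qN{j+1}$ from Subsection \ref{SS:QuantMod}. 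The twist $\theta_{S_j}$ can be read off from the action of $\theta = K^{r-1}\sum_n \frac{\{1\}^{2n}}{\{n\}!} q^{n(n-1)/2} (-KF)^n q^{-H^2/2} E^n$ recorded in the proof of Proposition \ref{P:JonesKauf}; on the highest weight vector of $S_j$ only the $n=0$ term survives, giving $\theta_{S_j}^{-1} = q^{(r-1)j - j^2/2}$ up to the standard normalization, so $\theta_{S_j} = q^{j(j+2)/2}$ (the usual $\mathfrak{sl}_2$ ribbon eigenvalue at this root of unity).

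With these ingredients the sum becomes $\Delta^{\sot}_+ = \sum_{j\text{ even}} (-1)^j q^{j(j+2)/2}\,\frac{\qn{j+1}^2}{\qn1^2}$. I would then relate this Gauss-type sum to the analogous sum defining $\Delta_+$ itself. From Theorem \ref{T:sl2CompSurgInv} one knows $\Delta_+ = \wb{\Delta_-}$ arises as $F'$ of a $+1$-framed unknot colored by a typical Kirby color $\Omega_\alpha = \sum_{k\in\Hr}\qd(\alpha+k)V_{\alpha+k}$, and by Proposition \ref{P:SandV} the bracket scalars of the typical modules $V_k$ coincide with those of the simple modules $S_{r-1-k}$. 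The strategy is therefore to expand $\Delta_+$ over the $2r$ (or $r$) terms indexed by $\Hr$, use the twist eigenvalues of the $V_\alpha$'s together with their modified dimensions $\qd$, and show that after the substitution dictated by Proposition \ref{P:SandV} the typical sum factors as $\qn1 r$ times the $\sot$-sum over even simple colors. The explicit closed form for $\Delta_\pm$ given in the excerpt (the case split on $r \bmod 4$) then yields the claimed identity $\Delta^{\sot}_+ = \Delta_+/(\qn1\, r)$.

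The relation $\Delta^{\sot}_- = \wb{\Delta^{\sot}_+}$ follows by the same computation with $-1$-framing, which replaces each twist eigenvalue by its inverse (hence conjugates the Gauss sum since $|q|=1$), together with the defining relation $\Delta_- = \wb{\Delta_+}$; the extra sign comes from $\Omega^{RT}_1$ versus $\Omega^{RT}_0$ and the parity bookkeeping, matching the stated $\Delta^{\sot}_- = -\Delta_-/(\qn1\,r)$. Finally, nonvanishing of $\Delta^{\sot}_\pm$ is immediate from the nonvanishing of $\Delta_\pm$ (visible in the case-by-case formula, each being a nonzero multiple of $(rq)^{3/2}$) and the fact that $\qn1 = q - q^{-1} \neq 0$ for $r\ge 2$.

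The main obstacle I expect is the Gauss-sum identification in the second paragraph: matching the even-indexed $\mathfrak{sl}_2$ twist sum against the typical sum defining $\Delta_+$ requires carefully tracking the modified dimension normalization $\qd$, the offset by $\Hr$, and the Proposition \ref{P:SandV} dictionary between $V_k$ and $S_{r-1-k}$, so that the spurious factor of $\qn1 r$ appears exactly once. The twist eigenvalue computation and the final nonvanishing are routine by comparison.
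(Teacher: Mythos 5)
Your opening computation agrees with the paper's: by linearity, $\Delta^{\sot}_+=\sum_{j\ \mathrm{even}}\qN{j+1}\,\theta_{S_j}\qdim(S_j)$, and since $\qdim(S_j)=(-1)^j\qN{j+1}$ this gives $\Delta^{\sot}_+=\qn1^{-2}\sum_{j\ \mathrm{even}}\qn{j+1}^2q^{(j^2+2j)/2}$ (note the twist on $S_j$ is $(-1)^jq^{(j^2+2j)/2}$, not $q^{j(j+2)/2}$; on even $j$ your slip is harmless). But at this point the paper simply expands $\qn{j+1}^2$ to rewrite the sum as $\qn1^{-2}\bp{q^2\Sigma_3+q^{-2}\Sigma_{-1}-2\Sigma_1}$ with $\Sigma_a=\sum_n q^{2(n^2+an)}$, evaluates these quadratic Gauss sums by standard results, and compares with the case-by-case formulas for $\Delta_\pm$ --- which in this paper are the \emph{definition} of $\Delta_\pm$, so nothing more is needed.

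Your substitute for that evaluation is where the genuine gap lies. You propose to realize $\Delta_+$ as $F'$ of a $+1$-framed unknot colored by a typical Kirby color $\Omega_\alpha$ and then pass to the even $S_j$-sum ``after the substitution dictated by Proposition~\ref{P:SandV}''. First, the premise is off: $F'$ of the bare Kirby-colored unknot equals $\sum_{k\in\Hr}\qd(\alpha+k)^2\theta_{\alpha+k}^{\pm1}$ (each term picks up a second factor of $\qd$ through $F'$), which is not $\Delta_\pm$; the correct statement from \cite{CGP} concerns the Kirby-colored $\pm1$-framed unknot \emph{encircling} a typical strand, and it is not contained in Theorem~\ref{T:sl2CompSurgInv} as stated. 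Second, and fatally for the plan as written, Proposition~\ref{P:SandV} applies only at atypical \emph{integer} colors $V_k$, whereas the Kirby color runs over typical colors $\alpha+k$ with $\alpha\notin\Z$, so there is no substitution available. Bridging the two requires letting $\alpha$ tend to an integer, where each weight $\qd(\alpha+k)$ blows up (poles on $\Z\setminus r\Z$) and only residue cancellations across the sum yield a finite limit --- this is exactly the delicate limit computation that occupies the proof of Theorem~\ref{ThMknot} in Section~\ref{S:ConjKnot} (which, moreover, invokes the present lemma at its final step, so one would have to check the argument is not run circularly). You yourself flag this matching as ``the main obstacle'' but supply no argument for it, so the central identity $\Delta^{\sot}_+=\Delta_+/(\qn1 r)$ is never actually derived. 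Incidentally, the sign in $\Delta^{\sot}_-=-\Delta_-/(\qn1 r)$ has nothing to do with $\Omega^{RT}_1$ (both $u_{+1}$ and $u_{-1}$ are colored by $\Omega^{RT}_0$): it falls out of conjugation alone, since the coefficients $\qn{j+1}^2/\qn1^2$ are real, $\Delta_-=\wb{\Delta_+}$, and $\wb{\qn1}=-\qn1$.
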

\begin{proof}
  The proof is a direct computation using the values of the quantum
  dimension and of the twist for the simple modules $S_n$.  In
  particular, we have $\qdim(S_i)=(-1)^i\dfrac{\qn{i+1}}{\qn1}$ and
  the twist on $S_i$ acts by the scalar $(-1)^iq^{\frac{i^2+2i}2}$.  Thus,
  $$\Delta^\sot_+=\qn1^{-2}\sum_{j=0,\ j\ \rm{even}}^{r-2}\qn{j+1}^2q^{\frac{j^2+2j}2}
  =\qn1^{-2}(q^2\Sigma_3+q^{-2}\Sigma_{-1}-2\Sigma_1)$$ where
  $\ds{\Sigma_a=\sum_{n=0}^{\lfloor\frac{r-2}2\rfloor}q^{2(n^2+an)}}$ is
  part of a quadratic Gauss sum.  These terms can be computed using
  standard results on quadratic Gauss sum.
\end{proof}

Kirby and Melvin \cite{KM} and Blanchet \cite{Bl} consider invariants of $(M,\emptyset,\coh)$ where $\coh \in H^1(M,\Zd)$.  The following theorem is a slight generalization of these invariants (here we use the conventions of this paper and not the conventions of \cite{KM,Bl}).    
\begin{theorem}[Refined Witten-Reshetikhin-Turaev invariants]\label{T:RefinedWRT}
Let $(M,T,\omega)$ be a compatible triple with $T$ a
$\cat_{\wb0}\cup\cat_{\wb1}$-colored ribbon graph and $\omega\in
H^1(M\setminus T,\Zd)$.  If $L$ is a link which gives rise to a surgery presentation of the
  pair $(M,T)$ then
  $$\WRT_r(M,T,\coh)=\frac{F(L\cup T)}
  {(\Delta^{\sot}_+)^{p}(\Delta^{\sot}_-)^{s}}$$
 is a well defined topological invariant (i.e. depends only of the
  homeomorphism class of the triple $(M,T,\coh)$), where $(p,s)$ is
  the signature of the linking matrix of the surgery link $L$ and for
  each $i$ the component $L_i$ is colored by a Kirby color of degree
  $g_{\coh}(L_i)$.
\end{theorem}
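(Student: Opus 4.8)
The plan is to prove that the right-hand side does not depend on the chosen surgery presentation $L$, so that it is a function of $(M,T,\coh)$ alone. Recall from relative Kirby calculus (\cite{Tu}) that two framed links $L,L'\subset S^3\setminus T$ presenting the same pair $(M,T)$ are related by a finite sequence of isotopies in $S^3\setminus T$, handle slides (Kirby~I), and blow-ups or blow-downs of a $\pm1$-framed unknot split from $L\cup T$ (Kirby~II). Since $F$ is an isotopy invariant of $\cat$-colored ribbon graphs in $S^3$, and isotopy leaves the linking matrix and its signature $(p,s)$ unchanged, invariance under the first move is automatic. Hence it suffices to treat Kirby~I and Kirby~II, checking in each case that the induced coloring $g_\coh$ still assigns to every surgery component a Kirby color of the correct $\Zd$-degree. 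Well-definedness of the quotient is legitimate because $\Delta^{\sot}_\pm\neq0$ by Lemma~\ref{L:Dsot}.

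First I would dispose of Kirby~II. When one adds a $\pm1$-framed unknot $U$ split from $L\cup T$, its meridian bounds a disc in the complement, so $g_\coh(U)=\wb0$ and $U$ is colored by $\Omega^{RT}_0$. Multiplicativity of $F$ under split union then gives $F\big((L\cup T)\sqcup U_{\pm1}\big)=\Delta^{\sot}_\pm\,F(L\cup T)$, using the very definition $\Delta^{\sot}_\pm=F(u_{\pm1})$ from Lemma~\ref{L:Dsot}. Simultaneously the signature $(p,s)$ increases by $(1,0)$ for framing $+1$ and by $(0,1)$ for framing $-1$, so the denominator $(\Delta^{\sot}_+)^p(\Delta^{\sot}_-)^s$ acquires exactly the compensating factor and the ratio is unchanged.

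The main work, and the step I expect to be the real obstacle, is Kirby~I. Sliding $L_i$ over $L_j$ replaces the meridian of $L_i$ by its band sum with the meridian of $L_j$, so the degree transforms additively, $g_\coh(L_i)\mapsto g_\coh(L_i)+g_\coh(L_j)$ in $\Zd$, and the slid component must be recolored by $\Omega^{RT}_{g_i+g_j}$; invariance of $F(L\cup T)$ under the move is exactly the \emph{graded sliding property} of the refined Kirby colors against the (arbitrary integral-weight) colors carried by the remaining strands and by the edges of $T$. To establish it I would note that $\qN{j+1}=(-1)^j\qdim(S_j)$, so that $\Omega^{RT}_0$ and $\Omega^{RT}_1$ are the two $\Zd$-graded pieces of the quantum-dimension-weighted color $\sum_{0\le j\le r-2}\qdim(S_j)\,S_j$. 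Since $\qdim(S_{r-1})=0$ the module $S_{r-1}$ is negligible and the closed evaluation $F$ obeys the same sliding identity as in the classical semisimple $\slt$ theory of Reshetikhin-Turaev (\cite{RT,Tu}), the fusion rule $S_{n-1}\otimes S_1\simeq S_n\oplus S_{n-2}$ for $2\le n<r$ recalled in the proof of Proposition~\ref{P:JonesKauf} guaranteeing the required closure. Being natural in the encircled object, the identity then extends to every integral-weight module, and because the $\Zd$-grading of $\cat$ is multiplicative under tensor product, encircling a strand by $\Omega^{RT}_g$ selects the correct graded component and updates degrees additively; this is precisely the refinement of the unrefined handle slide used by Kirby-Melvin \cite{KM} and Blanchet \cite{Bl}, carried out here in the presence of the fixed colored graph $T$.

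I expect the delicate points to be bookkeeping rather than algebra: one must check that the whole Kirby calculus can be performed inside $S^3\setminus T$, so that no move ever touches an edge of $T$, and that the standing hypothesis $r\notin4\Z$ is genuinely used, since it is what makes the $\Zd$-refinement of the surgery well defined (equivalently, it is the regime in which $\Delta^{\sot}_\pm\neq0$ and $\Delta^{\sot}_-=\wb{\Delta^{\sot}_+}$ hold). Granting the graded sliding property, the linking matrix undergoes an integral congruence under a handle slide, so its signature $(p,s)$ is preserved and both numerator and denominator are unchanged; combined with the analysis of Kirby~II, this shows the quotient depends only on the homeomorphism class of $(M,T,\coh)$, which is the assertion.
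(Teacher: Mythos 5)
Your argument hinges on the graded handle-slide, and the transformation law you use for the $\Zd$-degrees there is wrong — this is a genuine gap, not a bookkeeping slip. When $L_i$ is slid over $L_j$ (so $L_i'$ is $L_i$ band-summed with a framed \emph{pushoff of the curve} $L_j$), the meridian of the slid component is \emph{not} replaced by a band sum of meridians: meridians are dual to the $2$-handles, and since the slide acts on handles by $h_i'=h_i+h_j$, the meridians transform by the contragredient rule $m_i'=m_i$, $m_j'=m_j-m_i$. Hence the coloring induced by the fixed class $\coh$ satisfies $g_\coh(L_i')=g_\coh(L_i)$ and $g_\coh(L_j)\mapsto g_\coh(L_j)+g_\coh(L_i)$: the slid component \emph{keeps} its degree, and it is the component slid \emph{over} that must be recolored. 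Your rule ($L_i$ recolored by $\Omega^{RT}_{g_i+g_j}$, $L_j$ unchanged) in general violates the extension condition $\sum_k \operatorname{lk}(L_m,L_k)\,g_\coh(L_k)\equiv 0 \pmod 2$ characterizing colorings that come from a class on $M\setminus T$, so the recolored link you compare against is not a presentation of $(M,T,\coh)$ at all; correspondingly the sliding identity you invoke is false as stated — an odd-degree strand cannot be slid over an $\Omega^{RT}_{\wb 0}$-colored circle with the circle's color kept fixed, and the failure of precisely that identity is the reason the $\Zd$-refinement exists in the first place.

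Two further weaknesses. First, even with the corrected degree law, the graded slide (sliding a degree-$d$ strand over an $\Omega^{RT}_g$-circle while replacing $g$ by $g+d$ preserves $F$) does not follow from the unrefined semisimple slide you reduce to: the fusion-rule argument only yields invariance for the total color $\Omega^{RT}_0+\Omega^{RT}_1$, and separating the graded pieces is the actual content of the refinement — this is where $r\notin 4\Z$ and the nondegeneracy of the Gauss sums genuinely enter, and it needs a proof, not just naturality in the encircled object. Second, the relative Kirby calculus for pairs $(M,T)$ also requires slides of \emph{edges of $T$} over surgery components; your case analysis (isotopy in the complement, $L_i$ over $L_j$, split stabilizations) never treats these, although with the corrected rule they are handled uniformly (the circle slid over gains the degree of the $T$-edge). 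Note that the paper sidesteps this calculus entirely: for $T=\emptyset$ it identifies the invariant with Blanchet's $\theta_{q^{1/2}}$ via the meta-bracket \cite{Bl}, and in general it applies Theorem 3.7 of \cite{CGP} to the $\Zd$-graded semisimple quotient of the subcategory generated by $S_1$. If you want a self-contained proof along your lines, the graded slide lemma with the correct degree law, in the presence of $T$, is the statement you must actually establish.
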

\begin{proof}
  In \cite{KM}, for $T=\emptyset$ and $r$ even, this invariant is
  considered in a slightly different form.  Also, in Remark II.4.3 of
  \cite{Bl} for $T=\emptyset$, the existence of this invariant is
  discussed.  Indeed, the Reshetikhin-Turaev functor applied on graphs
  colored by the module $S_1\in\cat_{\wb 1}$ satisfies the Kauffman
  skein relation for $A=q^{\frac12}=\exp(\frac{i\pi}{2r})$.  It
  follows that if $L\subset S^3$ is a framed link whose components are
  colored by elements of $\{S_0,\ldots,S_{r-2}\}$ then $F(L)$ is the
  meta-bracket (see \cite{Bl,BHMV}) evaluated at the element
  corresponding to the coloring of $L$  
  at $A=q^{\frac12}$.  It
  follows that $\WRT_r(M,\emptyset,\coh)$ is the invariant denoted
  $\theta_{q^{\frac12}}(M_{L,g_\omega})$ in Remark II.4.3 of
  \cite{Bl}.

  For a complete proof of the theorem, one can also apply Theorem 3.7 of
  \cite{CGP} to the modular category obtained as the
  quotient of the subcategory of $\cat$ generated by $S_1$ 
  by its
  ideal of projective modules. 
   Indeed, this category is obviously a
  $\Zd$-modular category relative to $\emptyset$ with modified
  dimension $\qdim$ and trivial periodicity group.
\end{proof}

In particular, when $\coh=0$ one gets an invariant of manifolds also
known as the $SO(3)$ version of the Reshetikhin-Turaev invariants:
\begin{Df}
  Let $T$ be a $\cat_{\wb 0}$-colored ribbon graph in a closed
  3-manifold $M$ then
  $$WRT^\sot(M,T)=WRT(M,T,0).$$
\end{Df}

\begin{remark}
  Let us call $WRT^\sud(M,T)$ the original WRT-invariant which is
  obtained as in Theorem \ref{T:RefinedWRT} except that all components
  of $L$ are colored by $\Omega^{RT}=\Omega^{RT}_0+\Omega^{RT}_1$ (and
  the elements $\Delta^\sud_\pm$ are also defined with $\Omega^{RT}$).
  For odd $r$, it can be shown that $WRT(M,T,\coh)$ depends weakly of
  the compatible cohomology class $\coh\in H^1(M\setminus T,\Zd)$.
  Similarly, $WRT^\sud(M,T)$ is proportional to $WRT^\sot(M,T)$ (see
  \cite[Section III]{Bl}). A similar property holds for $\Nr^0$ and
  more generally, for admissible 
   triples: 
   $\Nr(M,T,\coh)$ depends
  essentially only of the reduction modulo $\Z$ of the compatible
  cohomology class $\coh\in H^1(M\setminus T,\C/2\Z)$.
  \\
  The behavior for $r$ even is different: in this case results of
  \cite{KM,Bl} suggest the following conjecture:
  $$WRT^\sud(M,T)=\sum_{\text{compatible }\coh\in H^1(M\setminus T,\Zd)}WRT(M,T,\coh).$$
\end{remark}

\section{Relations between $F'$ and the colored Jones polynomial}\label{S:RelationsFandJones}
Recall the $\ro$-dimensional modules $V_\alpha$, $\alpha \in \C$,
given in Subsection \ref{SS:QuantMod}.  Using the basis given in
Equation \eqref{E:BasisV} and its dual basis we identify $V_\alpha$
and $V_\alpha^*$ with $\C^\ro$.  With these identifications we can
identify certain $\Hom$-spaces with spaces of matrices.  For example,
we can make the following identifications: $\End_\cat(
V_\alpha)=Mat_{\ro \times \ro}(\C)$ and $\Hom(\C, V_\alpha \otimes
V_\alpha^*)=Mat_{1 \times \ro^2}(\C)$.

We say a function $g:\C\to \C$ is a \emph{Laurent polynomial in $q^\alpha$} if there exists a Laurent polynomial $f\in \C[x,x^{-1}]$ such that $g(\alpha)=f(q^\alpha)$.   The action of the basis given in Equation \eqref{E:BasisV} implies that all the  entries in the matrices $\rho_{V_\alpha}(E), \rho_{V_\alpha}(F), \rho_{V_\alpha}(H)$ and $\rho_{V_\alpha}(K)$ are Laurent polynomial in $q^\alpha$.  
\begin{lemma}\label{L:LaurPoly}
All the entries in the image of the maps 
\begin{align*}
g_b & :\C\to Mat_{1 \times \ro^2}(\C) \text{ given by }\alpha \mapsto b_{V_\alpha},\\
g_d & : \C\to Mat_{\ro^2\times 1}(\C) \text{ given by }\alpha \mapsto d_{V_\alpha},\\
g_{b'} &: \C\to Mat_{1 \times \ro^2}(\C) \text{ given by }\alpha \mapsto b'_{V_\alpha},\\
g_{d'} & :\C\to Mat_{\ro^2\times 1}(\C) \text{ given by }\alpha \mapsto d'_{V_\alpha}.
\end{align*}
are Laurent polynomials in $q^\alpha$.  Also, for each entry $f_{ij}$
in the image of the map $f:\C\times \C \to Mat_{\ro^2\times
  \ro^2}(\C)$, $(\alpha,\beta) \mapsto q^{-\alpha
  \beta/2}q^{-(r-1)(\alpha+\beta)/2}c_{V_\alpha,V_\beta}$ there exists
a two variable Laurent polynomial $g_{ij}(x,y)$ such that
$f_{ij}(\alpha,\beta)=g_{ij}(q^\alpha,q^\beta)$.
\end{lemma}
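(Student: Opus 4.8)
The plan is to dispose of the four duality morphisms, where the claim is nearly immediate, and then to reduce the braiding statement to controlling the single non-polynomial ingredient $q^{H\otimes H/2}$. The opening observation, already recorded before the lemma, is that the coefficient $\qn{i}\qn{i-\alpha}/\qn{1}^2$ appearing in $E.v_i$ in \eqref{E:BasisV} is a Laurent polynomial in $q^\alpha$, since $\qn{i-\alpha}=q^iq^{-\alpha}-q^{-i}q^\alpha$ while $\qn{i}$ and $\qn{1}$ are constants; iterating, the scalar by which $E^n$ carries $v_i$ to $v_{i-n}$ is again a Laurent polynomial in $q^\alpha$. Likewise $F$ acts by the constant matrix $v_i\mapsto v_{i+1}$, and $K^{\pm(r-1)}$ acts diagonally on $v_i$ by $q^{\pm(r-1)\alpha}q^{\pm(r-1)(r-1-2i)}$, a monomial in $q^\alpha$ up to an $\alpha$-independent scalar.

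For $g_b$ and $g_d$ the entries of $b_{V_\alpha}$ and $d_{V_\alpha}$ are the constants $0$ and $1$, independent of $\alpha$, so there is nothing to prove. The morphisms $b'_{V_\alpha}$ and $d'_{V_\alpha}$ differ from $b_{V_\alpha}$ and $d_{V_\alpha}$ only by inserting $K^{r-1}$, respectively $K^{1-r}$, on the $V_\alpha$ factor; by the previous remark these act by the monomial scalars $q^{\pm(r-1)\alpha}q^{\pm(r-1)(r-1-2i)}$, so each entry of $g_{b'}$ and $g_{d'}$ is such a monomial in $q^\alpha$, in particular a Laurent polynomial.

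For the braiding I would compute $c_{V_\alpha,V_\beta}=\tau\circ R$ on the basis $v_i\otimes w_j$ of $V_\alpha\otimes V_\beta$ directly from \eqref{eq:R}. The operator $\sum_n\frac{\qn{1}^{2n}}{\qn{n}!}q^{n(n-1)/2}E^n\otimes F^n$ sends $v_i\otimes w_j$ to a finite sum over $n$ of scalar multiples of $v_{i-n}\otimes w_{j+n}$, the scalar being $\frac{\qn{1}^{2n}}{\qn{n}!}q^{n(n-1)/2}$ times the coefficient of $E^nv_i$, which by the opening observation is a Laurent polynomial in $q^\alpha$ (the subsequent $\tau$ merely relabels the target). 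It then remains to apply $q^{H\otimes H/2}$, which multiplies $v_{i-n}\otimes w_{j+n}$ by $q^{\lambda\mu/2}$ where $\lambda=\alpha+a$ and $\mu=\beta+b$ with $a=r-1-2(i-n)$ and $b=r-1-2(j+n)$.

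The one delicate point, and the actual content of the lemma, is that $q^{\lambda\mu/2}$ is a priori a half-integer power and hence not a Laurent polynomial in $q^\alpha,q^\beta$; this is exactly what the normalization defining $f$ repairs. Writing $\lambda\mu=\alpha\beta+b\alpha+a\beta+ab$ gives
\[
q^{\lambda\mu/2}=q^{\alpha\beta/2}\,q^{b\alpha/2}\,q^{a\beta/2}\,q^{ab/2}.
\]
The prefactor $q^{-\alpha\beta/2}$ in $f$ cancels the first term. Since $a\equiv b\equiv r-1\pmod2$, we have $a\beta/2=(r-1)\beta/2-(i-n)\beta$ and $b\alpha/2=(r-1)\alpha/2-(j+n)\alpha$, so the prefactor $q^{-(r-1)(\alpha+\beta)/2}$ removes precisely the two half-integer contributions and leaves the monomial $q^{-(j+n)\alpha}q^{-(i-n)\beta}$ with integer exponents, while $q^{ab/2}$ is an $\alpha,\beta$-independent scalar. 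Hence each summand, and so each matrix entry $f_{ij}(\alpha,\beta)$, is a product of a constant, a Laurent polynomial in $q^\alpha$, and a monomial in $q^\alpha,q^\beta$; being a finite sum of these it has the form $g_{ij}(q^\alpha,q^\beta)$ for a two-variable Laurent polynomial $g_{ij}$, as required. I expect the only tedious part to be the bookkeeping of which pairs $(i,n)$ contribute to a given entry; the conceptual obstacle is resolved entirely by the parity $a\equiv b\equiv r-1\pmod2$, which ensures the normalizing prefactors clear all half-integer exponents.
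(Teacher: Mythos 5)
Your proposal is correct and follows essentially the same route as the paper's proof: read off the duality entries directly from the formulas (constants for $b,d$, monomials $q^{\pm(r-1)\alpha}q^{\pm(r-1)(r-1-2i)}$ for $b',d'$), and observe that the coefficient of $E^n\otimes F^n$ on $v_i\otimes v_j$ is a Laurent polynomial in $q^\alpha$ while the normalization $q^{-\alpha\beta/2}q^{-(r-1)(\alpha+\beta)/2}$ turns $q^{H\otimes H/2}$ into $q^{-(j+n)\alpha-(i-n)\beta}$ times a constant half-integer power of $q$ (the paper's $q^{c/2}$). Your expansion $\lambda\mu=\alpha\beta+b\alpha+a\beta+ab$ with $a=r-1-2(i-n)$, $b=r-1-2(j+n)$ is exactly the paper's computation, just written out more explicitly.
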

\begin{proof}
The first statement follows from the formulas for $b, d, b'$ and  $d'$ given in Subsection \ref{SS:QuantMod}.  For example,  the entry in image of $g_{d'}$ corresponding to $v_i\otimes v_j^*$ is $v_j^*(K^{1-r}v_i)= \delta_{ij}q^{(1-r)(\alpha + r-1-2i)}$.  The second statement follows from the form of the $R$-matrix given in Equation \eqref{eq:R}.  
In particular, if $v_i$ and $v_j$ are any basis vectors of $V_\alpha$ and $V_\beta$, respectively then  
$$q^{-\alpha \beta/2}q^{-(r-1)(\alpha+\beta)/2}\qr^{H\otimes H/2}E^n\otimes F^n.v_i\otimes v_j 
=q^{-\alpha(j+n)-\beta(i-n)}q^{c/2}E^n\otimes F^n.v_i\otimes v_j.$$
where $c$ is an integer which does not depend on $\alpha$ or $\beta$.
Also, $$E^n\otimes F^n(v_i\otimes v_j)=
\frac{\{i\}!}{\qn{i-n}!\qn1^{2n}}
{\qn{i-\alpha}\qn{i-1-\alpha}\cdots\qn{i-(n-1)-\alpha}}v_{i-n}\otimes
v_{j+n}.$$ Since the coefficients in the last two equalities are
Laurent polynomial in $q^\alpha$ and $q^\beta$, the desired result
about the function $f$ follows.
\end{proof}

 The above lemma has the following corollaries.

\begin{corollary}\label{C:TangleHolom}
  Let $T_{(V_{\alpha_1},\dots, V_{\alpha_n})}$ be a (1-1)-tangle with
  $n$ components whose $i^{th}$ component is colored by
  $V_{\alpha_i}$, $\alpha_i\in \C$.  Then the function $g_T:\C^n\to
  \C$ given by $(\alpha_1,\dots, \alpha_n)\mapsto
  \brk{T_{(V_{\alpha_1},\ldots, V_{\alpha_n})}}$ is a holomorphic
  function in $\C^n$.  In particular $g_T$ is continuous.
\end{corollary}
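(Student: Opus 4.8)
The plan is to show that the entire matrix $F(T_{(V_{\alpha_1},\dots,V_{\alpha_n})})$ depends holomorphically on $(\alpha_1,\dots,\alpha_n)$ and then to read off $g_T$ from it. Let $i_0$ be the index of the open strand. Since every $V_{\alpha}$ with $\alpha\in\C$ is absolutely irreducible (the typical case $\alpha\in\C\setminus\Xr$ is irreducible, and the atypical values $\alpha\in\Xr$ still satisfy $\End_\cat(V_\alpha)=\C\Id_{V_\alpha}$, as recalled in Subsection \ref{SS:QuantMod}), the endomorphism $F(T)$ of $V_{\alpha_{i_0}}$ equals $g_T(\alpha_1,\dots,\alpha_n)\,\Id$ for all values of the colors. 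Hence $g_T$ is any fixed diagonal entry of the matrix of $F(T)$, and it suffices to prove that this matrix has entries holomorphic on $\C^n$.

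Next I would recall how $F(T)$ is computed. Fix once and for all a generic diagram of the tangle with respect to a height function and slice it into finitely many horizontal strips, each containing exactly one elementary event: a local maximum, a local minimum, or a crossing. To each strip the functor $F$ assigns a matrix obtained by tensoring identity matrices with one of the elementary morphisms $b_{V_{\alpha_i}}, b'_{V_{\alpha_i}}, d_{V_{\alpha_i}}, d'_{V_{\alpha_i}}$ (for the caps and cups) or $c_{V_{\alpha_i},V_{\alpha_j}}^{\pm1}$ (for the crossings), and $F(T)$ is the ordered product of these matrices. The crucial point is that this combinatorial decomposition is fixed once the diagram is chosen; only the matrices attached to the strips vary with the colors.

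The core of the argument is that every elementary matrix has entries that are entire functions of the colors. For the cups and caps this is immediate from Lemma \ref{L:LaurPoly}: their entries are Laurent polynomials in the variables $q^{\alpha_i}=\e^{i\pi\alpha_i/r}$, and since $\alpha\mapsto q^\alpha$ is entire and nowhere zero, such Laurent polynomials are entire in $\alpha_i$. For a positive crossing, Lemma \ref{L:LaurPoly} gives $c_{V_\alpha,V_\beta}=q^{\alpha\beta/2}q^{(r-1)(\alpha+\beta)/2}\,\widetilde c(\alpha,\beta)$, where $\widetilde c$ has entries that are Laurent polynomials in $(q^\alpha,q^\beta)$; the scalar prefactor is entire, so the entries of $c_{V_\alpha,V_\beta}$ are entire. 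The one genuinely delicate point is the negative crossing, which contributes $c_{V_\alpha,V_\beta}^{-1}$: the inverse of a matrix of Laurent polynomials is a priori only rational, so I must rule out poles. Here I would use that the braiding is an isomorphism for every $(\alpha,\beta)$ and that the prefactor never vanishes, so $\widetilde c(\alpha,\beta)$ is invertible on all of $\C^2$; consequently $\det\widetilde c$, which is a Laurent polynomial in $(q^\alpha,q^\beta)$, is a nowhere-vanishing entire function, and from $\widetilde c^{-1}=\operatorname{adj}(\widetilde c)/\det\widetilde c$ together with $c_{V_\alpha,V_\beta}^{-1}=q^{-\alpha\beta/2}q^{-(r-1)(\alpha+\beta)/2}\,\widetilde c^{-1}$ it follows that $c_{V_\alpha,V_\beta}^{-1}$ again has entire entries.

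Finally I would assemble the pieces. Each elementary matrix depends holomorphically on the relevant subset of the variables, hence holomorphically on all of $\C^n$, and the class of matrix-valued functions with holomorphic entries is closed under ordered matrix products and Kronecker products, since these produce entries that are finite sums of products of holomorphic functions. Therefore $F(T)$ has entries holomorphic on $\C^n$, and so does $g_T$; continuity is then automatic. The main obstacle in this scheme is precisely the treatment of the inverse braiding, and it is resolved by the everywhere-invertibility of the braiding rather than by any further computation.
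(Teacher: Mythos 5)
Your proof is correct and follows essentially the same route as the paper: decompose a diagram of the tangle into cups, caps and crossings, apply Lemma \ref{L:LaurPoly} to see that each elementary matrix has entries holomorphic in the colors (Laurent polynomials in $q^{\alpha_i}$ up to the Gaussian prefactors), and use absolute irreducibility of every $V_\alpha$ to read off the scalar $g_T$ from $F(T)$. The one point where you go beyond the paper is the explicit treatment of negative crossings via everywhere-invertibility of the braiding and the adjugate formula --- the paper leaves this implicit (there $c_{V_\alpha,V_\beta}^{-1}$ can alternatively be handled directly, since $E^r=F^r=0$ makes the sum in Equation \eqref{eq:R} equal to the identity plus a nilpotent operator, so $R^{-1}$ has an explicit expansion of the same Laurent-polynomial shape); your version is a sound and welcome clarification rather than a different argument.
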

\begin{proof}
Assume the $1^{st}$ component is the open component.  By definition we have 
$$F(T_{(V_{\alpha_1},\dots, V_{\alpha_n})})= \brk{T_{(V_{\alpha_1},\dots, V_{\alpha_n})}}\Id_{V_{\alpha_1}}$$
so it is enough to consider $F(T_{(V_{\alpha_1},\dots,
  V_{\alpha_n})})$.  The value of $F(T_{(V_{\alpha_1},\dots,
  V_{\alpha_n})})$ is computed by decomposing a projection of
$T_{(V_{\alpha_1},\dots, V_{\alpha_n})}$ into building blocks made of
cups, caps, vertical edges and crossings.  Then the building blocks
are associated with the duality morphisms, identity and the positive
and negative braidings, respectively.  These morphisms are tensored
and composed according to the projection of $T_{(V_{\alpha_1},\dots,
  V_{\alpha_n})}$.  Lemma \ref{L:LaurPoly} implies that the
contributions from a duality morphism corresponding to a cup or cap on
the $i^{th}$ component is a Laurent polynomial in $q^{\alpha_i}$.
Lemma \ref{L:LaurPoly} also implies the all contributions of a
crossing between the $i^{th}$ and $j^{th}$ components are Laurent
polynomials in $q^{\alpha_i}$ and $q^{\alpha_j}$ times a factors of
$q^{-\alpha_i \alpha_j/2}q^{-(r-1)(\alpha_i+\alpha_j)/2}$.  Thus, the
map $g_T(\alpha_1,\dots, \alpha_n)= \brk{T_{(V_{\alpha_1},\dots,
    V_{\alpha_n})}}$ is a Laurent polynomials in the variables
$q^{\alpha_1},\dots, q^{\alpha_n}$ times a integral powers of
$q^{-\alpha_i \alpha_j/2}q^{-(r-1)(\alpha_i+\alpha_j)/2}$ and so $g_T$
is holomorphic.
\end{proof}

\begin{corollary}\label{C:LaurentPoly}
  Let $K$ be a knot.  Let $K^f_{V_\alpha}$ be $K$ colored by
  $V_\alpha$ with framing $f\in \Z$.  Let $T_{V_\alpha}^0$ be a
  (1-1)-tangle with zero framing whose closure is $K_{V_\alpha}^0$.
  Then there exists a Laurent polynomial $\wt K(X)\in\C[X,X^{-1}]$
  such that $\brk{T_{V_\alpha}^0}=\wt K(q^\alpha)$ and
  \begin{equation}
    \label{E:NtildeK}
    F'(K_{V_\alpha}^f)=\theta_\alpha^f\qd(\alpha)\wt K(q^\alpha)
  \end{equation} 
  where $\theta_\alpha=q^{\frac12(\alpha^2-(\ro-1)^2)}$ is the twist
  on $V_\alpha$.  Moreover,\\ $\wt K(q^{\alpha+r})=\wt K(q^\alpha)$,
  $F'(K_{V_{\alpha+2r}}^f)=q^{2r\alpha f}F'(K_{V_\alpha}^f)$ and
  $F'(K_{V_{\alpha+r}}^f)=(-1)^{r+1}(iq^\alpha)^{rf}F'(K_{V_{\alpha}}^f)$.
\end{corollary}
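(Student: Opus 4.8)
The plan is to establish the four assertions in order, the first two being immediate from Corollary~\ref{C:TangleHolom}, the periodicity $\wt K(q^{\alpha+r})=\wt K(q^\alpha)$ being the real content, and the last two transformation laws following formally.

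First I would read off the Laurent polynomial. By Corollary~\ref{C:TangleHolom} and its proof, $\brk{T^0_{V_\alpha}}$ is a Laurent polynomial in $q^\alpha$ multiplied by an integer power of the anomaly factor $q^{\alpha^2/2+(r-1)\alpha}$ contributed by the self-crossings of the chosen diagram; since each positive (resp. negative) self-crossing contributes this factor (resp. its inverse), the total power equals the writhe of the diagram, which is the framing $0$. Hence $\brk{T^0_{V_\alpha}}=\wt K(q^\alpha)$ for an honest $\wt K\in\C[X,X^{-1}]$. For equation~\eqref{E:NtildeK}, inserting $f$ curls turns the zero-framed diagram into one of writhe $f$ and multiplies the value of the functor by $\theta_\alpha^f$; combining with $F'(K^f_{V_\alpha})=\qd(\alpha)\brk{T^f_{V_\alpha}}$ and $\brk{T^f_{V_\alpha}}=\theta_\alpha^f\wt K(q^\alpha)$ gives the claim.

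The crux is the periodicity. Since $q^{\alpha+r}=-q^\alpha$, it asserts that $\wt K$ is an \emph{even} Laurent polynomial, equivalently $\brk{T^0_{V_{\alpha+r}}}=\brk{T^0_{V_\alpha}}$. I would base this on the isomorphism $V_{\alpha+r}\cong V_\alpha\otimes \mathbb{C}_r$, where $\mathbb{C}_r$ is the one-dimensional weight module on which $E,F$ act by $0$ and $K$ by $q^r=-1$; this is checked directly on the basis of~\eqref{E:BasisV} using $\qn{i-\alpha-r}=-\qn{i-\alpha}$. Equivalently, on the common space $\C^r$ the passage $\alpha\mapsto\alpha+r$ replaces $(E,F,H,K)$ by $(-E,F,H+r,-K)$. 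The tensor network computing $\brk{T^0_{V_{\alpha+r}}}$ is then obtained from that of $\brk{T^0_{V_\alpha}}$ by this substitution; conjugating every strand by the diagonal gauge $\operatorname{diag}((-1)^i)$ (which sends $E\mapsto-E$, $F\mapsto-F$ and fixes $H,K$) absorbs the sign on $E$, and the residual scalars produced by the weight shift — the $q^{(H+r)\otimes(H+r)/2}$ correction in each $R$-matrix and the $K^{1-r}$ in each duality morphism — reorganize into data depending only on the writhe and the rotation number of the diagram. For a zero-framed knot these cancel, yielding $\brk{T^0_{V_{\alpha+r}}}=\brk{T^0_{V_\alpha}}$. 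This bookkeeping of residual scalars is the step I expect to be the main obstacle.

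The two remaining transformation laws are then formal. From the explicit formulas one gets $\theta_{\alpha+r}=(iq^\alpha)^r\,\theta_\alpha$ (using $q^{r^2/2}=i^r$) and $\qd(\alpha+r)=(-1)^{r+1}\qd(\alpha)$ (using $\qn{\alpha+r}=-\qn{\alpha}$ and $\qn{r\alpha+r^2}=(-1)^r\qn{r\alpha}$). Substituting these and the periodicity into~\eqref{E:NtildeK} gives $F'(K^f_{V_{\alpha+r}})=(-1)^{r+1}(iq^\alpha)^{rf}F'(K^f_{V_\alpha})$. Applying this relation twice, and simplifying $(iq^{\alpha+r})^{rf}(iq^\alpha)^{rf}=(q^{2\alpha})^{rf}$ together with $(-1)^{2(r+1)}=1$, produces $F'(K^f_{V_{\alpha+2r}})=q^{2r\alpha f}F'(K^f_{V_\alpha})$.
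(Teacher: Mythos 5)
Your reading of the easy parts agrees with the paper's proof: the power of $q^{\alpha^2/2}$ accumulated at self-crossings equals the writhe, i.e.\ the framing $0$, giving the Laurent polynomial $\wt K$; Equation \eqref{E:NtildeK} follows by inserting $f$ curls; and your identities $\theta_{\alpha+r}=(iq^\alpha)^r\theta_\alpha$ and $\qd(\alpha+r)=(-1)^{r+1}\qd(\alpha)$ are exactly what the paper substitutes into \eqref{E:NtildeK} (it obtains the $2r$-law directly rather than by iterating the $r$-law, which is immaterial). You also found the paper's key algebraic input for the periodicity, namely $V_{\alpha+r}\cong V_\alpha\otimes\sig$ where $\sig$ is the one-dimensional module on which $H$ acts by $r$.

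But your execution of the periodicity step has a genuine gap, and one sub-claim in it is false as stated. Conjugation by $D=\operatorname{diag}((-1)^i)$ does \emph{not} absorb the sign on $E$: conjugating by $\operatorname{diag}(c^i)$ sends $E\mapsto c^{-1}E$ and $F\mapsto cF$, so with $c=-1$ it flips the signs of \emph{both} $E$ and $F$ and merely moves the sign from $E$ to $F$. In fact the diagonal $(-1)^i$ factors arise elsewhere: from the weight shift inside $q^{H\otimes H/2}$, since $q^{r\lambda/2}=q^{r(\alpha+r-1)/2}(-1)^i$ on the $i$-th basis vector; working this out gives, at a positive crossing, $c_{V_{\alpha+r},V_{\alpha+r}}=q^{r(\alpha+r-1)}\,i^r\,(D\otimes 1)\,c_{V_\alpha,V_\alpha}\,(D^{-1}\otimes 1)$, where the two $D$-insertions land on \emph{different} strands of the diagram, so they do not cancel by sliding. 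Moreover the duality morphisms shift too: $d'$ and $b'$ each acquire $q^{(1\mp r)(\mp r)}=(-1)^{r+1}$ from $K^{\pm(r-1)}$, which is nontrivial when $r$ is even, and the count of primed cups and caps is rotation-number data. Your assertion that the residual scalars ``reorganize into data depending only on the writhe and the rotation number'' and ``cancel'' for a zero-framed knot is therefore precisely the unproved content: the rotation number of a zero-framed diagram is generally nonzero (it is $\pm1$ already for the round unknot), so no automatic cancellation occurs, and you yourself flag this bookkeeping as the expected main obstacle without resolving it. Since the periodicity is the crux of the corollary, the proposal is incomplete at its central point.

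The paper avoids all of this by staying graphical: it recolors the cut strand by $V_\alpha\otimes\sig$, i.e.\ passes to the $2$-cable of $T^0$ with components colored $V_\alpha$ and $\sig$, and invokes the scalar relations \eqref{eq:sigmabraid}. Each crossing change between the $\sig$-strand and the $V_\alpha$-strand costs the global scalar $q^{(\alpha+r-1)r}$, and the signed number of changes needed to split off the $\sig$-component is the linking number of the two cable components, which equals the framing and hence vanishes; likewise the $\sig$-strand's own crossings and twist contribute trivially because its writhe is zero, and $\brk{T^0_\sig}=1$. This yields $\brk{T^0_{V_{\alpha+r}}}=\brk{T^0_{V_\alpha}}\brk{T^0_\sig}=\brk{T^0_{V_\alpha}}$ with no matrix-level bookkeeping; to repair your route you would effectively have to re-derive these scalar relations strand by strand and then carry out a Whitney-index type count to dispose of the rotation-number factors.
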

\begin{proof}
  As in the proof of Corollary \ref{C:TangleHolom} the function
  $g_T(\alpha)= \brk{T_{V_\alpha}^0}$ is a Laurent polynomial in
  $q^\alpha$ times an integral power of $q^{\alpha^2/2}$.  From the
  form of the map $g_c$ in Lemma \ref{L:LaurPoly} the integral power
  of $q^{\alpha^2/2}$ is equal to the number of positive crossing
  minus the number of negative crossing in the projection of
  $T_{V_\alpha}^0$.  Since the framing of $K_\alpha$ is zero this
  power is zero.  Thus, $g_T(\alpha)$ is a Laurent polynomials in
  $q^\alpha$ and so there exists a $\wt K(X)\in\C[X,X^{-1}]$ such that
  $\brk{T_\alpha^0}=\wt K(q^\alpha)$.  Now we can use the duality and
  the braiding to compute the value of the twist:
  $$\theta_\alpha=\brk{\epsh{fig16}{30pt}\put(-3,-7){\ms{\alpha}}}=q^{\frac12(\alpha^2-(\ro-1)^2)}.$$
  Then Equation \eqref{E:NtildeK} follows from the above discussion
  and the definition of $F'$:
  $$F'(K_{V_\alpha}^f)=\theta_\alpha^fF'(K^0_{V_\alpha})=\theta_\alpha^f\qd(\alpha)\brk{T_{V_\alpha}^0} =\theta_\alpha^f\qd(\alpha)\wt K(q^\alpha).$$  

  Next we will show that $\wt K(q^{\alpha+r})=\wt K(q^\alpha)$.
  Consider the one dimensional space $\sig=\C$ with the $\Ubar$-module
  structure given by
  $$Ev=Fv=0,  \;\; Hv=rv$$
  for any $v\in \sig$.  The quantum dimension of $\sig$ is
  $(-1)^{r+1}$.  From the form of the $R$-matrix we have:
\begin{equation}
  \label{eq:sigmabraid}
  \brk{\epsh{fig16}{30pt}\put(-3,-7){\ms{\sig}}}=-i^{-r} ,\quad 
  F\left( \put(5,17){$\sig$}\put(18,17){$\sig$}\epsh{fig26}{9ex}\right)=
  i^rF\left(\put(5,17){$\sig$}\put(19,17){$\sig$}\!\epsh{fig10}{9ex}\ \ \epsh{fig10}{9ex}
  \right)\quad\text{and}\quad
  F\left( \put(5,17){$\sig$}\put(12,17){$V_\alpha$}\epsh{fig26}{9ex}\right)=
  q^{(\alpha +r -1)r} F\left(\put(5,17){$\sig$}\put(12,17){$V_\alpha$}\epsh{fig27}{9ex}
  \right).
\end{equation}
Hence for a 0-framed knot $K$ colored with $\sig$, one has
$F(K)=F(\text{unknot})=(-1)^{r+1}$.\\
Let $T^0$ be the zero framed tangle underlying $T_{V_\alpha}^0$.  Let
$T^0_\sig$ be $T^0$ colored with $\sig$.  Since $T^0_\sig$ has zero
framing then $\brk{ T^0_{\sig}}=1$.  Now $F(T_{V_{\alpha +r}}^0)$ is
equal to the endomorphism associated to $T^0$ labeled with $V_\alpha
\otimes \sig$ or equivalently the 2-cabling of $T^0$ where the two
components are labeled by $V_\alpha$ and $\sig$, respectively.  We can
use the third equality in Equation \eqref{eq:sigmabraid} to unlink the
component labeled with $\sig$ from the component labeled with
$V_\alpha$.  Therefore, since $T_{V_{\alpha +r}}^0$ has zero framing
we have
$$\brk{ T_{V_{\alpha +r}}^0}=\brk{ T_{V_{\alpha}}^0}\brk{ T^0_{\sig}}=\brk{ T_{V_{\alpha}}^0}.$$

Finally, Equation \eqref{E:NtildeK} and the above formulas for $\theta_\alpha$ and $\qd(\alpha)$ imply:
 $$F'(K_{V_{\alpha+2r}}^f)=\theta_{\alpha+2r}^f\qd(\alpha+2r)\wt K(q^{\alpha+2r}) 
 =(q^{(2r\alpha +2r^2)}\theta_{\alpha})^f \qd(\alpha)\wt K(q^{\alpha})
 =q^{2r\alpha f}F'(K_{V_{\alpha}}^f)$$ and
 similarly $$F'(K_{V_{\alpha+r}}^f)=\theta_{\alpha+r}^f\qd(\alpha+r)\wt
 K(q^{\alpha+r}) =(-1)^{r+1}(iq^\alpha)^{r f}F'(K_{V_{\alpha}}^f)$$
 which concludes the proof.
\end{proof}
\begin{remark}\label{rem:symmetry}
  Corollary \ref{C:LaurentPoly} with Proposition \ref{P:SandV} imply
  the well known symmetry principle relating the colored Jones
  polynomial associated to $S_{k-1}$ with the one associated to
  $S_{r-1-k}$ for $k\in\{1,\ldots,r-2\}$.
\end{remark}

\begin{corollary}\label{cor:residuejones}
  Let $K$ be a knot and let $K_{V_\alpha}$ be $K$ colored by
  $V_\alpha$.  The function $g_K:\C\setminus \Xr \to\C$ defined by
  $\alpha \mapsto F'(K_{V_\alpha})$ is a meromorphic function on the
  whole plane $\C$.  Moreover, the residue at each pole is determined
  by the colored Jones polynomial.
\end{corollary}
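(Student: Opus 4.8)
The plan is to show that $g_K(\alpha)=F'(K_{V_\alpha})$ extends meromorphically to all of $\C$ and to pin down its poles and residues using the structural results already established. First I would invoke Corollary \ref{C:LaurentPoly}, which gives the explicit factorization
$$F'(K_{V_\alpha})=\theta_\alpha\,\qd(\alpha)\,\wt K(q^\alpha),$$
where $\wt K(X)\in\C[X,X^{-1}]$ is a Laurent polynomial, $\theta_\alpha=q^{\frac12(\alpha^2-(r-1)^2)}$, and the modified dimension is $\qd(\alpha)=(-1)^{r-1}r\,\qn{\alpha}/\qn{r\alpha}$ by \eqref{E:Def_qd}. Each of the three factors is manifestly meromorphic on all of $\C$: the twist $\theta_\alpha$ is entire (an exponential of a polynomial in $\alpha$), the composite $\wt K(q^\alpha)$ is entire since $q^\alpha=\e^{\alpha i\pi/r}$ is entire and nonvanishing, and $\qd(\alpha)$ is a ratio of entire functions. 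Hence $g_K$ is meromorphic on $\C$, and its only possible poles come from the zeros of the denominator $\qn{r\alpha}=q^{r\alpha}-q^{-r\alpha}$.

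Next I would locate the poles. The zeros of $\qn{r\alpha}=\e^{r\alpha i\pi/r}-\e^{-r\alpha i\pi/r}=2i\sin(\alpha\pi)$ occur exactly at the integers $\alpha\in\Z$. At such an $\alpha=k$ the numerator factor $\qn{\alpha}=2i\sin(k\pi/r)$ vanishes precisely when $k\in r\Z$, so $\qd$ is regular there; thus the genuine poles of $g_K$ lie at $k\in\Z\setminus r\Z=\Xr$, consistent with the domain $\C\setminus\Xr$ on which $F'(K_{V_\alpha})$ was originally defined. Since $2i\sin(\alpha\pi)$ has a simple zero at each integer, each pole is simple, and I would compute its residue by the standard formula for a simple pole of a quotient: the residue of $\qd(\alpha)$ at $\alpha=k$ contributes a factor proportional to $1/\cos(k\pi)=(-1)^k$ from differentiating $\sin(\alpha\pi)$, times the value $\wt K(q^k)\,\theta_k$ of the regular factors.

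The final and most substantive step is to identify $\wt K(q^k)$ at an integer $k\in\Xr$ with a colored Jones polynomial. Here I would use Proposition \ref{P:SandV}, which identifies the bracket scalars $\brk{T_{V_k}}=\brk{T_{S_{r-1-k}}}$ for $k\in\{0,\dots,r-1\}$ (and the analogous statement for negative $k$). Since $\wt K(q^\alpha)=\brk{T^0_{V_\alpha}}$ by Corollary \ref{C:LaurentPoly}, evaluating at $\alpha=k$ gives $\wt K(q^k)=\brk{T^0_{S_{r-1-k}}}$, which by Proposition \ref{P:JonesKauf} is (up to the known normalization relating the $(1$-$1)$-bracket to the closed invariant $F$) the colored Jones polynomial of $K$ colored by $S_{r-1-k}$, specialized at $q=\e^{i\pi/r}$. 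Assembling the residue formula with this identification shows each residue is proportional to a colored Jones polynomial of $K$. The main obstacle I anticipate is purely bookkeeping rather than conceptual: one must carefully track the twist factor $\theta_k$, the $(-1)^k$ from the simple zero of $\sin$, and the normalization constant relating $\brk{T^0_{S_{r-1-k}}}$ to the honest colored Jones value, so that the proportionality constant is stated cleanly; the meromorphy and the location of poles follow almost immediately from the explicit formula of Corollary \ref{C:LaurentPoly}.
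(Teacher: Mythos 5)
Your proposal is correct and follows essentially the same route as the paper: meromorphy from the holomorphic bracket (you route through the factorization of Corollary \ref{C:LaurentPoly}, the paper uses Corollary \ref{C:TangleHolom} directly, but these rest on the same Lemma \ref{L:LaurPoly}), simple poles at $\Xr$ from the zeros of $\qn{r\alpha}$, the residue of $\qd$ computed via $\sin(\pi\alpha)$, and the identification of the value at integer colors with the colored Jones polynomial via Proposition \ref{P:SandV}, Proposition \ref{P:JonesKauf} and $\qdim(S_{r-1-k})$. Only two bookkeeping points to fix: carry the framing exponent $\theta_\alpha^f$ (it is entire and $2r$-periodic, so harmless), and invoke the $r$-periodicity $\wt K(q^{\alpha+r})=\wt K(q^{\alpha})$ from Corollary \ref{C:LaurentPoly} explicitly to reduce a general pole $n=k+2mr$ to $k\in\{1,\ldots,r-1\}$ or $\{1-r,\ldots,-1\}$ before applying Proposition \ref{P:SandV}.
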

\begin{proof}
  Recall that $$F'(K_{V_\alpha})= \qd(\alpha) \brk{T_{V_\alpha}}
  =(-1)^{r-1}\prod_{j=1}^{r-1} \frac{\{j\}}{\{\alpha+r-j\}}
  \brk{T_{V_\alpha}}$$ where $_{V_\alpha}$ is the (1-1)-tangle
  obtained from cutting $K_{V_\alpha}$.  From Corollary
  \ref{C:TangleHolom} it follows that $\alpha \mapsto
  (-1)^{r-1}\prod_{j=1}^{r-1} \{j\} \brk{T_{V_\alpha}}$ is a
  holomorphic function in the entire plane $\C$.  Also, it is clear
  that $\alpha \mapsto \prod_{j=1}^{r-1} \{\alpha+r-j\}$ is a
  holomorphic function in the entire plane $\C$ which is zero when
  $\alpha \in \Z\setminus r\Z$.  Therefore, the quotient of these two
  functions is a meromorphic function whose set of poles is $
  \Z\setminus r\Z$.

  All of these poles are simple and so the residue can be computed as
  follows.  Let $n\in \Z\setminus r\Z$.  The residue at $n$ of the
  $2r$-periodic meromorphic function $\qd$ is given by
  \newcommand{\Res}{\operatorname{Res}}
  $$\Res(\qd,n)=\lim_{\alpha\to n}
  (\alpha-n)(-1)^{r-1}\frac{r\qn\alpha}{\qn{r\alpha}}=\lim_{x\to 0}
  (-1)^{r-1}\frac{xr\sin\bp{\frac{\pi(n+x)}r}}{\sin\bp{{\pi(n+x)}}}
  =(-1)^{r-1+n}\frac r\pi\sin\bp{\frac{n\pi}r}.$$ 
  So the residue of
  $g_K$ at $n$ is equal to
  $$\Res(g_K,n)=\Res(\qd,n)\brk{T_{V_n}}
  =(-1)^{r-1+n}\frac r\pi\sin\bp{\frac{n\pi}r}\brk{T_{V_n}}.$$

 To finish the proof we will show that the above formula of $\Res(g_K,n)$ can be rewritten in terms of the colored Jones polynomial.
   To do this 
     we have two cases.  
     First, suppose $n=k+2mr$ with $k\in  \{1,\ldots,r-1\}$ and $m\in\Z$.  By Corollary \ref{C:LaurentPoly} and Proposition \ref{P:SandV} we have  
     $$\brk{T_{V_n}}=\brk{T_{V_k}}=\brk{T_{S_{r-1-k}}}.$$
     Combining the fact that  $\qdim(S_{r-1-k})=(-1)^{r-1-k}\frac{\qn{r-k}}{\qn{1}}=(-1)^{r-1-k}\frac{\sin\bp{\frac{k\pi}r}}{\sin\bp{\frac{\pi}r}}$ and Proposition~\ref{P:JonesKauf} we have  
       $$J_{r-1-k}(K)|_{q=\e^{i\pi/r}}=(-1)^{r-1-k}\frac{\sin\bp{\frac{k\pi}r}}{\sin\bp{\frac{\pi}r}}\brk{T_{S_{r-1-k}}}.$$
       Thus,
     $$\Res(g_K,n)=\frac r\pi\sin\bp{\frac{\pi}r}J_{r-1-k}(K)|_{q=\e^{i\pi/r}}.$$

  Similarly, if $n=k+2mr$ with $k\in \{1-r,\ldots,-1\}$ and $m\in\Z$
  then 
  one can show that
  $$\Res(g_K,n)=-\frac r\pi\sin\bp{\frac{\pi}r}J_{r-1+k}(K)|_{q=\e^{i\pi/r}}.$$
\end{proof}

\section{Surgery on a knot in the 3-sphere $S^3$}\label{S:ConjKnot}
In this section we prove Conjecture \ref{C:MainConj} when $M$ is an
empty closed manifold obtained by surgery on a non-zero framed knot in
$S^3$:

\begin{theorem}\label{ThMknot}
  Suppose that $K$ is a knot in $S^3$ with non-zero framing $f$.  Let
  $M$ be the manifold obtained by surgery on the knot $K$ and
  $\omega\in H^1(M,\Zd)$.  Then
  $$\Nr^0(M,\emptyset,\omega)=|f|\WRT(M,\emptyset,\omega)
  =\ord(H_1(M; \Z)) \WRT(M,\emptyset,\omega).$$
\end{theorem}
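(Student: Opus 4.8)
The plan is to compute both invariants $\Nr^0(M,\emptyset,\omega)$ and $\WRT(M,\emptyset,\omega)$ from the surgery presentation given by the single knot $K$ with framing $f$, and then compare them directly. Since $M$ is obtained by surgery on a knot with linking matrix the $1\times1$ matrix $(f)$, we have $\ord(H_1(M;\Z))=|f|$, so the last equality in the statement is immediate and the content is in the first equality. The signature of $(f)$ is $(1,0)$ if $f>0$ and $(0,1)$ if $f<0$, so the normalization factors will involve a single $\Delta_\pm$ (resp.\ $\Delta^\sot_\pm$) depending on the sign of $f$.

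First I would express $\WRT(M,\emptyset,\omega)$ using Theorem~\ref{T:RefinedWRT}: it equals $F(K_{\Omega^{RT}})/(\Delta^\sot_\pm)$, where $K$ is colored by the Kirby color $\Omega^{RT}_{g}$ of the appropriate degree $g=g_\omega(K)\in\{\wb0,\wb1\}$. Simultaneously I would express $\Nr^0$ via Theorems~\ref{T:sl2CompSurgInv} and~\ref{T:N0sl2}. The key idea is to evaluate $\Nr^0$ using a surgery presentation where the knot $K$ is colored by the Kirby color $\Omega_\alpha$ of degree $\wb\alpha$ for $\alpha$ non-integral, using the relation $F'(K_{\Omega_\alpha})=\sum_{k\in\Hr}\qd(\alpha+k)F'(K_{V_{\alpha+k}})$. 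By Corollary~\ref{C:LaurentPoly}, each term is $\theta_{\alpha+k}^f\qd(\alpha+k)\wt K(q^{\alpha+k})$, and since $\wt K$ is $r$-periodic in $q^\alpha$ we have $\wt K(q^{\alpha+k})=\wt K(q^\alpha)$ for every $k\in\Hr$. The plan is then to take the limit as $\wb\alpha\to g$ (i.e.\ $\alpha\to$ an integer of the correct parity), producing the $SO(3)$-type sum over the $S_j$ from the $V_{\alpha+k}$ via Proposition~\ref{P:SandV}, and to track how the Kirby color $\Omega_\alpha$ degenerates into $\Omega^{RT}_g$.

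The heart of the computation is matching the weighting coefficients and the twist/framing contributions on both sides. On the $\WRT$ side the components carry $\qN{j+1}$ weights, while on the $\Nr^0$ side they carry $\qd(\alpha+k)$ weights that have poles at integers; the extra factor $\qd(\alpha)$ dividing $\Nr^0$ in Theorem~\ref{T:N0sl2} (through $(S^3,u_\alpha,\coh_\alpha)$) is what renormalizes these poles so the limit is finite. I expect the residue analysis of Corollary~\ref{cor:residuejones}, which relates $\qd(\alpha+k)\brk{T_{V_{\alpha+k}}}$ near integers to the colored Jones values, to be exactly the tool that converts the $V$-colored sum into the $S$-colored sum defining $\Omega^{RT}_g$. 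The framing factor $\theta_{\alpha+k}^f$ must be shown to limit to $(-1)^{j}q^{(j^2+2j)f/2}$, reproducing the twist on $S_j$ that enters $\Delta^\sot_\pm$; here the periodicity formula $F'(K_{V_{\alpha+r}}^f)=(-1)^{r+1}(iq^\alpha)^{rf}F'(K_{V_\alpha}^f)$ from Corollary~\ref{C:LaurentPoly} controls how the framing interacts with the shift by $r$.

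The main obstacle will be bookkeeping the precise proportionality constant so that the ratio of normalizations $\Delta_\pm/\Delta^\sot_\pm$ combines with the factor-of-$\qd(\alpha)$ renormalization and the residue limits to yield exactly $|f|$ rather than some other power of $r$ or root of unity. Lemma~\ref{L:Dsot} gives $\Delta^\sot_\pm=\pm\Delta_\pm/(\qn1 r)$, so a factor of $\qn1 r$ appears in the ratio; I expect this $r$ to cancel against the factor of $r$ in the residue formula $\Res(\qd,n)=(-1)^{r-1+n}\frac{r}{\pi}\sin(\frac{n\pi}{r})$ and the $\sin(\frac\pi r)$ appearing through $J_{r-1\mp k}$, while the $|f|$ emerges from the behavior of the $f$-th power of the twist under the sum over the full period $\Hr$ (which has $r$ elements but collapses under $S$-identification to the $\lfloor(r-1)/2\rfloor$-ish range indexing $\Omega^{RT}_g$). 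Verifying that all these constants conspire to give precisely $|f|$, and checking the sign of $f$ enters only through which of $\Delta_\pm$ is used, is the delicate step; everything else is an organized application of the corollaries already proved.
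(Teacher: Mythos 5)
There is a genuine gap at the foundation of your plan: you treat $\Nr^0(M,\emptyset,\omega)$ as the limit, as $\wb{\alpha}\to g$, of evaluations $F'(K_{\Omega_\alpha})/\Delta_{\pm}$ with $K$ itself colored by a Kirby color of non-integral degree. But $\Nr^0$ is not defined as a limit of $\Nr$: by Theorem \ref{T:N0sl2} it is computed from the connected sum with $(S^3,u_\beta,\coh_\beta)$. For $\omega$ integral, coloring $K$ by a Kirby color of integral degree is not a computable presentation, and for $\wb\alpha$ non-integral your link presents a \emph{different} triple; moreover, since $f\neq0$ makes $M$ a rational homology sphere, a compatible class must satisfy $f\,\wb\alpha=0$ in $\C/2\Z$, so the compatible classes form a finite set and there is no continuous family of invariants whose limit you could identify with $\Nr^0(M,\emptyset,\omega)$. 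Worse, the quantity you propose to take a limit of diverges: each term of $F'(K_{\Omega_\alpha})=\sum_{k\in\Hr}\qd(\alpha+k)^2\,\theta_{\alpha+k}^f\,\wt K(q^{\alpha+k})$ carries a \emph{double} pole at integral $\alpha$ (one $\qd$ from the Kirby color, one from the definition of $F'$), the double-pole coefficients $\operatorname{Res}(\qd,\cdot)^2$ do not cancel, and the single factor $\qd(\beta)$ in Theorem \ref{T:N0sl2} (which refers to the color of the extra unknot, not of $K$) can at best remove one order of pole. The missing idea, which is the heart of the paper's proof, is to slide the unknot of the connected sum over $K$, producing the $2$-cable $DK_{(\Omega_{e-\alpha},V_\alpha)}$ whose Kirby color has non-integral degree $e-\alpha$: this presentation is computable for generic $\alpha$, each term carries only one factor $\qd(e-\alpha+h)$, and after simplification the paper obtains an expression for $\Nr^0$ (a weighted sum of the continuous function $\PP(\alpha)=\sum_{k\in\Hr}F'(K_{V_{\alpha+k}})$ at integer arguments) that is finite and manifestly independent of $\alpha$.

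A second, concrete error: you assert $\wt K(q^{\alpha+k})=\wt K(q^\alpha)$ for all $k\in\Hr$. Corollary \ref{C:LaurentPoly} gives only $\wt K(q^{\alpha+r})=\wt K(q^\alpha)$, i.e.\ invariance under the shift $\alpha\mapsto\alpha+r$; the elements of $\Hr$ differ by $2$, and the values $\wt K(q^{\alpha+k})$ for distinct $k$ are genuinely different --- they are precisely what encode the distinct colored Jones values $J_n(K)$ that must survive to match the sum defining $\Omega^{RT}_g$. Your simplification would collapse the whole computation to a multiple of a single value of $\wt K$, which cannot equal $\WRT$. Relatedly, the factor $|f|$ does not come from a collapse of the twist over the period $\Hr$, nor from the residue analysis of Corollary \ref{cor:residuejones}: in the paper it arises from a pole--zero cancellation. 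After pairing the terms $n$ and $n-r$ via the quasi-periodicity $F'(K^f_{V_{\alpha+2r}})=q^{2r\alpha f}F'(K^f_{V_\alpha})$, one meets
\begin{equation*}
\lim_{\ve\to0}\ \qd(\ve+2n+e)\bp{1-q^{-2rf\ve}},
\end{equation*}
where the simple pole of $\qd$ at the integer $2n+e$ is killed by the simple zero of $1-q^{-2rf\ve}$, leaving a factor proportional to $rf$; the conversion to $S$-colors then uses Proposition \ref{P:SandV} through the symmetrization $\vp_e(k)+\vp_e(-k)=\qn{k}^2\brk{T_{S_{r-1-k}}}$, and Lemma \ref{L:Dsot} with $\Delta^{\sot}_{\sign(f)}=\sign(f)\,\Delta_{\sign(f)}/(\qn1 r)$ converts $rf$ into $|f|$. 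So while your normalization bookkeeping in the last paragraph points at the right constants, the two pillars of your argument --- the limit definition of $\Nr^0$ and the $\Hr$-periodicity of $\wt K$ --- both fail, and the cabling construction that replaces them is absent from the proposal.
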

\begin{corollary}
  Let $M$ be a rational homology sphere obtained by surgery on a knot
  in $S^3$ then
  $$ \WRT^{\sot}(M,\emptyset)=\frac1{\ord(H_1(M; \Z))}\Nr^0(M,\emptyset,0).$$
\end{corollary}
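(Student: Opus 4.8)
The plan is to obtain this Corollary as an immediate specialization of Theorem \ref{ThMknot} to the trivial cohomology class, so that most of the work is bookkeeping to verify that the hypotheses line up. First I would record that, since $M$ is a rational homology sphere, the group $H_1(M;\Z)$ is finite and hence $\ord(H_1(M;\Z))$ is a strictly positive integer; this nonvanishing is precisely what will let me divide at the end. Because $M$ is presented by surgery on a single knot $K\subset S^3$, the linking matrix is the $1\times1$ matrix $[f]$, where $f$ is the framing of $K$, and (as recalled just after Conjecture \ref{C:MainConj}) this matrix presents $H_1(M;\Z)\cong\Z/f\Z$. The rational-homology-sphere hypothesis therefore forces $f\neq0$ — a zero framing would give $H_1(M;\Z)\cong\Z$, which is infinite — so $K$ has non-zero framing, Theorem \ref{ThMknot} applies, and $\ord(H_1(M;\Z))=|f|$.

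Next I would take $\omega=0\in H^1(M,\Zd)$ and note that $(M,\emptyset,0)$ is a compatible triple, since the compatibility condition of Definition \ref{def:adm} is vacuous when $T=\emptyset$; thus $0$ is an admissible choice in Theorem \ref{ThMknot}. By the definition of the $SO(3)$-version of the invariant one has $\WRT^{\sot}(M,\emptyset)=\WRT(M,\emptyset,0)$. Feeding $\omega=0$ into Theorem \ref{ThMknot} then yields
$$\Nr^0(M,\emptyset,0)=\ord(H_1(M;\Z))\,\WRT(M,\emptyset,0)=\ord(H_1(M;\Z))\,\WRT^{\sot}(M,\emptyset).$$
Dividing through by the nonzero integer $\ord(H_1(M;\Z))$ gives exactly
$$\WRT^{\sot}(M,\emptyset)=\frac{1}{\ord(H_1(M;\Z))}\,\Nr^0(M,\emptyset,0),$$
which is the asserted identity.

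Since the entire statement is a one-line consequence of Theorem \ref{ThMknot}, I do not expect any genuine obstacle: all of the analytic and diagrammatic content has already been absorbed into that theorem. The only points that require care are the two routine checks highlighted above — that $\omega=0$ is a legitimate compatible class on the empty graph, and that the rational-homology-sphere hypothesis guarantees $\ord(H_1(M;\Z))\neq0$ so that the final division is valid.
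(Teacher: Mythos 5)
Your proposal is correct and follows exactly the route the paper intends: the corollary is stated as an immediate consequence of Theorem \ref{ThMknot} (with no separate proof given), obtained by setting $\omega=0$, using $\WRT^{\sot}(M,\emptyset)=\WRT(M,\emptyset,0)$, and noting that the rational-homology-sphere hypothesis forces the framing $f$ to be non-zero so that $\ord(H_1(M;\Z))=|f|\neq0$. Your two explicit checks (compatibility of the trivial class on the empty graph, and the nonvanishing needed for the division) are precisely the routine details the paper leaves implicit.
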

\begin{remark}
  The three invariants $WRT$, $\Nr^0$ and $M\mapsto\ord(H_1(M,\Z))$
  are multiplicative 
  with respect
  to the connected sum of 3-manifolds.
  Hence Theorem \ref{ThMknot} implies that Conjecture \ref{C:MainConj}
  is also true for a connected sum of manifolds, each obtained by
  surgery on a knot in $S^3$.
\end{remark}
The rest of this section is devoted to the proof of Theorem \ref{ThMknot}.
\begin{proof}[Proof of Theorem \ref{ThMknot}.]
  First we improve the results of \cite[Section 2.4]{CGP} 
  and derive 
  a formula for $\Nr^0(M,\emptyset,\omega)$.  We still denote by
  $\omega$ the integer in $\{0,1\}$ whose class modulo $2$ is the
  value $g_\omega(K)$ of the cohomology class on the meridian of $K$
  and let $e\in\{0,1\}$ be such that $\wb e=\wb{r-1+\omega}\in\Zd$.

  For $\alpha \in \srcol$, recall the function $\PP(\alpha)=\sum_{k\in\Hr}F'(K_{V_{\alpha+k}})$
  of \cite[Section 2.4]{CGP} (as above, $K_V$ means $K$ colored by $V$).
  The function $\PP$  is 
  continuous and so can be naturally extended to all of $\C$.  
  Indeed, let $DK_{(V_\alpha,V_\beta)}$ be the 2-cable of $K$ whose
  components are colored with $V_\alpha$ and $V_\beta$ such that
  $\alpha$ or $\beta$ is in $\srcol$.
  From Lemma \ref{L:LaurPoly} we have that the map
  $(\alpha,\beta)\mapsto q^{-\frac
    f2(\alpha^2+\beta^2+2\alpha\beta)}F'(DK_{(V_\alpha,V_\beta)})$ is
  a rational function in
  $\frac1{q^{r\alpha}-q^{-r\alpha}}\C[q^{\pm\alpha},q^{\pm\beta}]\cap\frac1{q^{r\beta}-q^{-r\beta}}\C[q^{\pm\alpha},q^{\pm\beta}]$
  (also see the proof of Corollary \ref{C:TangleHolom}).  Thus, this
  function is a Laurent polynomial in
  $\C[q^{\pm\alpha},q^{\pm\beta}]$.  In addition, if $\alpha+\beta\in
  \srcol$ then $F'(DK_{(V_\alpha,V_\beta)})$ can be computed by
  coloring $K$ with $V_\alpha\otimes
  V_\beta\simeq\bigoplus_{k\in\Hr}V_{\alpha+\beta+k}$.  Combining the
  statements of this paragraph we have
  $$\PP(\alpha+\beta)=\sum_{k\in \Hr}
  F'\big(K_{V_{\alpha+\beta+k}}\big)=F'(DK_{(V_\alpha,V_\beta)})$$ is
  a continuous function of $(\alpha,\beta)$, which we extend to all of
  $\C\times \C$.

  Next we give a formula for $\Nr^0$ in terms of $\PP$.  By sliding
  the unknot $o_\alpha$ on $K$ we obtain a computable presentation of
  $(M,\emptyset,\omega)\#(S^3,o_\alpha,\omega_\alpha)$ as in Theorem
  \ref{T:N0sl2}.  This produces the link
  $DK_{(\Omega_{e-\alpha},V_\alpha)}$ where
  $\Omega_{e-\alpha}=\sum_{h\in \Hr} \qd(e-\alpha+h)V_{e-\alpha+h}$ is
  a Kirby color of degree $\wb{\omega-\alpha}$.  By definition of
  $\Nr^0$,
  $$\Nr^0(M,\emptyset,\omega)= \frac{1}{\Delta_{\sign(f)}\qd(\alpha)}\sum_{h\in \Hr}
  \qd(e-\alpha+h)F' \big(DK_{(V_\alpha,V_{e-\alpha+h})}\big).$$ Since
  $\qn{r(e-\alpha+h)}=\qn{-r(e-\alpha+h)}=(-1)^\omega\qn{r\alpha}$ we
  have
  $$\Delta_{\sign(f)}\Nr^0(M,\emptyset,\omega)= \frac{(-1)^\omega}{\qn\alpha}
  \sum_{h\in \Hr} \qn{\alpha-h-e}\PP(h+e)$$
  $$=\frac{(-1)^\omega q^{\alpha}}{q^\alpha-q^{-\alpha}}\sum_{h\in\Hr}q^{-h-e}\PP(h+e)
  -\frac{(-1)^\omega
    q^{-\alpha}}{q^\alpha-q^{-\alpha}}\sum_{h\in\Hr}q^{h+e}\PP(h+e).$$
  Finally, as  $\Nr^0(M,\emptyset,\omega)$
  does not depend on $\alpha$ we have
  $$\Nr^0(M,\emptyset,\omega)
  =\frac{(-1)^\omega}{\Delta_{\sign(f)}}\sum_{k\in\Hr}q^{k+e}\PP(k+e)
  =\frac{(-1)^\omega}{\Delta_{\sign(f)}}\sum_{k\in\Hr}q^{-k-e}\PP(k+e).$$

  Next we use the last formula and the continuity of $\PP$ to write a
  multiple of $\Nr^0$.  In particular, let $S$ be the following limit:
  \begin{align*}
    S=(-1)^\omega\Delta_{\sign(f)}\Nr^0(M,\emptyset,\omega)
  &=\lim_{\ve\to0}\sum_{\ell\in\Hr}q^{\ell+e}\PP(\ve+\ell+e)\\
  & = \lim_{\ve\to0}\sum_{k,\ell\in\Hr}q^{\ell+e}F'(K_{(\ve+k+\ell+e)})\\
    &=\lim_{\ve\to0}\sum_{n=1-\ro }^{\ro -1}\sum_{\tiny{\begin{array}{c}
          k,\ell\in\Hr\\k+\ell=2n \end{array}}}
    q^{\ell+e}F'(K_{(\ve+2n+e)})
  \end{align*}
  In this sum, for fixed $n$ the only part of the interior sum which
  varies is $q^\ell$ for $k,\ell\in\Hr$ with $k+\ell=2n$.  Here the
  possible values of $\ell$ are integers from $\max(1-\ro ,1-\ro +2n)$
  to $\min(\ro -1,\ro -1+2n)$ and so the sum of $q^\ell$, over these
  values, is equal to $q^{n}\dfrac{\qn{\ro
      -|n|}}{\qn1}=q^{n}\dfrac{\qn{|n|}}{\qn1}$.  Therefore, we have
  the following expression for $S$:
  \begin{align*}
    S&=
    \lim_{\ve\to0}\frac{1}{\qn1}\sum_{n=1-\ro }^{\ro -1}q^{n+e}{\qn{|n|}}F'(K_{V_{\ve+2n+e}})\\
    &=\lim_{\ve\to0}\frac{1}{\qn1}
    \sum_{n=1}^{\ro -1}\bp{{\qn{|n|}}q^{n+e}F'(K_{V_{\ve+2n+e}})
      +{\qn{|n-r|}}q^{n+e-r}F'(K_{V_{\ve+2n+e-2r}})}.
      \end{align*}
Now  Corollary \ref{C:LaurentPoly} and a direct computation show that
        \begin{align*}
         S   &=\lim_{\ve\to0}\frac{1}{\qn1}
    \sum_{n=1}^{\ro -1}F'(K_{V_{\ve+2n+e}}){\qn{n}}q^{n+e}\bp{1-q^{-2rf(\ve+2n+e)}}\\
    &=\frac{1}{\qn1}
    \sum_{n=1}^{\ro -1}\brk{T_{V_{2n+e}}}{\qn{n}}q^{n+e}\lim_{\ve\to0}\qd(\ve+2n+e)\bp{1-q^{-2rf\ve}}\\
    &=\frac{(-1)^{r-1}r}{\qn1}
    \sum_{n=1}^{\ro -1}\brk{T_{V_{2n+e}}}{\qn{n}}q^{n+e}\qn{2n+e}\lim_{\ve\to0}\frac{\qn{rf\ve}}{\qn{r\ve+re}}\\
    &=\frac{(-1)^\omega rf}{\qn1}
    \sum_{n=1}^{\ro -1}q^{n+e}{\qn{n}}\qn{2n+e}\brk{T_{V_{2n+e}}}.
  \end{align*}
  Coming back to $\Nr^0$, we have
  \begin{equation*}
    \Nr^0(M,\emptyset,\omega)=\frac{rf}{\qn1\Delta_{\sign(f)}}
    \sum_{n=1}^{\ro -1}q^{n+e}{\qn{n}}\qn{2n+e}\brk{T_{V_{2n+e}}}
    =c\sum_{n=0}^{r-1}\vp_e(2n+e).
  \end{equation*}
  where $c=\frac{rf}{\qn1\Delta_{\sign(f)}}$ and
  $\vp_e(k)=(q^{k}-q^e)\qn{k}\brk{T_{V_{k}}}$.  From Corollary
  \ref{C:LaurentPoly}, $\vp_e$ is $2r$-periodic.  Furthermore, 
  Proposition \ref{P:SandV} implies that for $k\in\{1,\ldots,r-1\}$,
  one has
  $$\vp_e(k)+\vp_e(-k)=(q^{k}-q^e-q^{-k}+q^e)\qn{k}\brk{T_{S_{r-1-k}}}
  =\qn{k}^2\brk{T_{S_{r-1-k}}}.$$
  So, using that $\vp_0(0)=\vp_e(r)=0$, we can write
  \begin{align*}
    \Nr^0(M,\emptyset,\omega)
    &=c\sum_{\tiny{
        \begin{array}{c}
          k\in e+2\Z\\0< k<2r
        \end{array}}}\vp_e(k)
    =c\bp{\sum_{\tiny{
        \begin{array}{c}
          k\in e+2\Z\\0< k<r
        \end{array}}}\vp_e(k)+\sum_{\tiny{
        \begin{array}{c}
          k\in e+2\Z\\-r< k<0
        \end{array}}}\vp_e(k)}\\
    &=c\sum_{\tiny{
        \begin{array}{c}
          k\in e+2\Z\\0< k<r
        \end{array}}}\qn{k}^2\brk{T_{S_{r-1-k}}}
    =\frac{rf}{\qn1\Delta_{\sign(f)}}\sum_{\tiny{
        \begin{array}{c}
          k\in e+2\Z\\0< k<r
        \end{array}}}\qn{r-k}^2\brk{T_{S_{r-1-k}}}\\
    &=\frac{rf}{\qn1\Delta_{\sign(f)}}
    \sum_{\tiny{
        \begin{array}{c}
          n\in\omega+2\Z\\0\le n\le r-2
        \end{array}}}\qn{n+1}^2\brk{T_{S_n}}.
  \end{align*}
  Finally, $\qdim(S_n)=(-1)^n\qN{n+1}$ implies 
$$\Nr^0(M,\emptyset,\omega)=\frac{|f|}{\Delta^\sot_{\sign(f)}}
\sum_{\tiny{
        \begin{array}{c}
          n\in\omega+2\Z\\0\le n\le r-2
        \end{array}}}\qdim(S_n)J_n(K)= |f|WRT(M,\emptyset,\omega).$$
    \renewcommand{\qedsymbol}{\fbox{\ref{ThMknot}}}
\end{proof}
\renewcommand{\qedsymbol}{\fbox{\theDf}}

\section{Vanishing of $\Nr^0$ for non-homology spheres}\label{S:ConjNonHomol}

\begin{theorem}
  Let $(M,T,\coh)$ any compatible triple.  If $b_1(M)>0$ then $\Nr^0(M,T,\coh)=0$.
\end{theorem}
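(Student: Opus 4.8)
The plan is to show that the invariant $\Nr^0(M,T,\coh)$ vanishes whenever $b_1(M)>0$ by exploiting the freedom in choosing a surgery presentation together with the behavior of $F'$ and the Kirby colors under the scaling $\alpha\mapsto\alpha+2r$ recorded in Corollary \ref{C:LaurentPoly}. First I would fix a surgery link $L=L_1\cup\cdots\cup L_m\subset S^3$ presenting $(M,T,\coh)$ and recall that the condition $b_1(M)>0$ means the linking matrix of $L$ is degenerate, i.e.\ it has nontrivial kernel over $\Q$ (equivalently $H_1(M;\Z)$ is infinite, so $\ord(H_1(M;\Z))=0$). I would then set up, as in the proof of Theorem \ref{ThMknot} and \cite[Section 2.4]{CGP}, the computable presentation obtained by merging $(M,T,\coh)$ with $(S^3,u_\alpha,\coh_\alpha)$, expressing $\qd(\alpha)\Nr^0(M,T,\coh)$ as a normalized value $F'$ of $L\cup T$ with each $L_i$ colored by a Kirby color $\Omega_{\alpha_i}$ of the prescribed degree. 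The key point is that $\Nr^0$ is genuinely \emph{independent} of the auxiliary parameter $\alpha$, while the explicit expression has controlled quasi-periodicity in the $\alpha_i$.

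The heart of the argument is a periodicity-versus-growth dichotomy. I would examine how the quantity $F'(L\cup T)$ transforms when one shifts a single surgery color index $\alpha_i\mapsto\alpha_i+2r$. Corollary \ref{C:LaurentPoly} (in its multivariable analogue, obtained exactly as the two-cable computation there) shows that such a shift multiplies the evaluation by a factor of the form $q^{2r(\text{linking data})}$; more precisely, cabling with the one-dimensional module $\sig$ of weight $r$ introduced in the proof of Corollary \ref{C:LaurentPoly} lets one compute the effect of each shift as a monomial whose exponent is a $\Z$-linear form in the framings and linking numbers — that is, the entries of the linking matrix $A$ applied to the shift vector. Since $A$ is degenerate there is a nonzero integer vector $v\in\ker A$; shifting the colors by $2r\,v$ leaves \emph{all} these monomial factors equal to $1$, so the evaluation is literally invariant under that shift, whereas the normalization $\qd(\alpha)$ and the unknot factor behave in a controlled, in general nontrivial, way.

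The plan is then to combine this exact shift-invariance with the $\alpha$-independence of $\Nr^0$ to force a vanishing. Concretely, I would compute $\qd(\alpha)\,\Nr^0(M,T,\coh)$ two ways: once directly, and once after performing the shift of the Kirby-color indices along a kernel direction $v\in\ker A$. Because $\Nr^0$ does not depend on $\alpha$, letting the shift parameter run produces a relation of the form $\qd(\alpha)\,\Nr^0=\qd(\alpha+c)\,\Nr^0$ for a family of shifts, while the $\sig$-cabling computation shows the underlying $F'$ value is unchanged; since $\qd$ is nonconstant (it is $2r$-periodic and meromorphic but certainly not constant in $\alpha$), the only way both can hold is $\Nr^0(M,T,\coh)=0$. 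Equivalently, and perhaps more cleanly, one sums the explicit Kirby-color expansion: the degeneracy of $A$ makes the relevant inner sum over the kernel direction into a full geometric-type sum of roots of unity $\sum q^{2r\cdot 0}=\qn{\text{something}}$ that vanishes identically, exactly paralleling how the sums $\sum_{\ell}q^{\ell}$ were resummed in the proof of Theorem \ref{ThMknot}. The main obstacle I anticipate is bookkeeping the multivariable quasi-periodicity of $F'(L\cup T)$ cleanly: one must verify that the shift factors are honest monomials governed precisely by the linking matrix (with no stray contributions from $T$ or from the duality normalizations), so that a kernel vector of $A$ really does produce an exact invariance rather than mere quasi-periodicity. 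Making that dependence on $A$ explicit — via the $\sig$-cabling unlinking trick of Corollary \ref{C:LaurentPoly} applied componentwise — is the step that requires the most care.
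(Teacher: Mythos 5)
Your two structural observations are sound --- $b_1(M)>0$ is equivalent to degeneracy of the linking matrix $A$, and the quasi-periodicity of $F'$ under $\alpha_i\mapsto\alpha_i+2r$ is governed by $A$ (as in Corollary \ref{C:LaurentPoly}) --- but both mechanisms you propose for extracting vanishing break down. First, the dichotomy ``$\qd(\alpha)\,\Nr^0=\qd(\alpha+c)\,\Nr^0$ with $\qd$ nonconstant'' is vacuous: every shift available in your scheme is by an integer multiple of $2r$, and $\qd$ is \emph{exactly} $2r$-periodic (as noted in the proof of Corollary \ref{cor:residuejones}; immediate from $\qd(\alpha)=(-1)^{r-1}r\qn{\alpha}/\qn{r\alpha}$ and $q^{2r}=q^{2r^2}=1$), so both sides of your relation are literally identical and nothing is forced. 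Indeed, exact invariance of the evaluation under a shift by $2rv$ with $v\in\ker A$ is just a special case of the invariance of $\Nr$ under changing each Kirby color within its degree class (Theorem \ref{T:sl2CompSurgInv}), so it cannot carry new information by itself. Second, your fallback claim --- that the inner sum along the kernel direction becomes a ``geometric-type sum of roots of unity that vanishes'' --- is false as stated: along a kernel direction the monomial factors all equal $1$, and a sum of equal nonzero terms is the number of terms, not zero. The mechanism actually visible in the paper's computation (Theorem \ref{ThMknot} when $\det A=f=0$) is different: a pairwise cancellation of Kirby-color terms at distance $2r$, encoded in the factor $\bp{1-q^{-2rf\ve}}$ vanishing identically, inside the limit $\lim_{\ve\to0}\qd(\ve+2n+e)\bp{1-q^{-2rf\ve}}$ where a simple pole of $\qd$ meets that zero. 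To run this for a general degenerate $A$ you would need to redo the entire resummation of Section \ref{S:ConjKnot} for multi-component links, settle computability (sliding the auxiliary unknot changes the degree of only one surgery component, so making all degrees non-integral is not automatic), and control the extra monomials $q^{2r\mu_e\sum_i v_i\lk(L_i,T_e)}$ from the linking of $L$ with $T$ --- which you flag yourself, and which do not vanish along $\ker A$ in general.

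The paper's proof is entirely different and much softer, and its engine is a fact your plan never invokes: if a compatible triple admits a computable surgery presentation then $\Nr^0=0$ (Theorem \ref{T:N0sl2}, resp.\ Propositions 1.5 and 3.14 of \cite{CGP}). Since $b_1(M)>0$ there is a nontrivial $\delta\in H^1(M;\Z)$; for all small $\alpha\neq0$ the deformed class $\coh+\bar\alpha\delta$ is non-integral on the surgery components, hence $\Nr^0(M,T,\coh+\bar\alpha\delta)=0$, and holomorphy of the renormalized bracket in the colors (Corollary \ref{C:TangleHolom}) makes $\alpha\mapsto\Nr^0(M,T,\coh+\bar\alpha\delta)$ continuous at $\alpha=0$, forcing $\Nr^0(M,T,\coh)=0$. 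If you want to salvage your computational route, its correct repair is essentially this deformation: perturb the colors along a kernel/cohomological direction by a small $\ve$ rather than by $2r$, and let the pole of $\qd$ fight the zero of the quasi-periodicity factor --- which is precisely the Section \ref{S:ConjKnot} computation, and which the continuity argument replaces wholesale in the general case.
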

\begin{proof}
  Since $b_1(M)>0$ there exists a non-trivial $\delta \in H^1(M;
  \Z)\subset H^1(M; \C)$.  For $\alpha\in \C$ let $\bar{\alpha}\delta
  \in H^1(M\setminus T; \C/2\Z)$ be the trivial extension of
  $\bar{\alpha}\delta \in H^1(M; \C/2\Z)$ where $\bar{\alpha}$ is the
  image of $\alpha$ in $\C/2\Z$.
  Then $(M,T,\coh+\bar{\alpha}\delta)$ is a compatible triple for all
  $\alpha\in \C$.  Moreover, there exists a neighborhood $N$ of $0\in
  \C$ such that $\coh +\bar \alpha \delta$ is non-integral for all
  $\alpha \in N\setminus \{0\}$.  Then for a complex number $\alpha
  \in N\setminus \{0\}$, 
  Propositions 1.5 and 3.14 of \cite{CGP} implies that
  $\Nr^0(M,T,\coh+\bar \alpha \delta)=0$.
 
  Now, for all $\alpha\in \C$, by definition of $\Nr^0$ we have
 $$\Nr^0(M,T,\coh+\bar \alpha \delta)=\Nr((M,T,\coh+\bar \alpha \delta)
 \#(S^3,o_\beta,\coh_\beta))/ \qd(\beta)$$ where $o_\beta$ is the
 unknot in $S^3$ colored by $V_\beta$, $\beta\in \srcol$ and
 $\coh_\beta$ be the unique element of $H^1(S^3\setminus
 o_\beta,\C/2\Z)$ such that $(S^3,o_\beta,\coh_\beta)$ is a compatible
 triple.  
 To compute the right side of this equation, we choose a 
 link $L^{\omega\#\omega_\beta}\cup T\cup o_\beta$  which is a  computable presentation 
 of $(M,T,\coh) \#(S^3,o_\beta,\coh_\beta)$.  Then the same link colored
 by $\omega_\alpha'=(\omega+\bar \alpha \delta)\#\omega_\beta$ gives a
 presentation of 
 $(M,T,\coh+\bar \alpha \delta) \#(S^3,o_\beta,\coh_\beta)$.  
For each component $L_i$ of $L^{\omega'_\alpha}$ the color $g_{\omega_\alpha'}(L_i)$ is an affine functions of $\alpha$.  The link $L^{\omega'_\alpha}$ is computable if and only if all the  colors  $g_{\omega_\alpha'}(L_i)$ are in  $\C/2\Z\setminus\Z/2\Z$.  Let $N'$ be the open set of $\C$ consisting of $\alpha$ such that $L^{\omega'_\alpha}$ is computable.  Then $N'$ contains $0$ since $L^{\omega'_0}$ is computable.  
 
 Now we have $$\Nr^0(M,T,\coh+\bar \alpha \delta)=\frac{F'(L^{\omega'_\alpha} \cup T\cup o_\beta)}{\qd(\beta)\Delta_+^{p}\ \Delta_-^{s}}=\frac{\brk{L^{\omega'_\alpha} \cup T\cup |_{V_\beta}}}{\Delta_+^{p}\ \Delta_-^{s}}$$
 where $ |_{V_\beta}$ is the trivial one-component (1-1)-tangle
 colored with $V_\beta$.  
 The function 
 $$\alpha \mapsto \brk{L^{\omega'_\alpha} \cup T\cup
   |_{V_\beta}}$$ is continuous on $N'$ because it is a weighted sum of
 continuous functions (by Corollary \ref{C:TangleHolom}) where the
 weights are products of functions $\qd$ evaluated 
 away from their
 poles.  Thus, 
 $\Nr^0(M,T,\coh+\bar \alpha \delta)$
 is continuous at
 $\alpha=0$.  
Finally, since $\Nr^0(M,T,\coh+\bar \alpha \delta)$ vanishes on $N$, we have $\Nr^0(M,T,\coh)=~0$.
\end{proof}

\end{document}